\documentclass[a4paper,12pt]{preprint}
\usepackage[margin=1.2in]{geometry}  
\usepackage[full]{textcomp}
\usepackage[osf]{newtxtext}

\usepackage{mathalfa}
\usepackage{microtype}

\usepackage{enumitem}
\usepackage{amsmath}
\usepackage{amsfonts} 
\usepackage{amssymb}             

\usepackage{comment}
\usepackage{breakurl}

\usepackage{mathrsfs}
\usepackage{booktabs}
\usepackage{mathtools}

\usepackage{tikz}
\usepackage{amsthm}                
\usepackage{mhequ}
\usepackage{hyperref}

\usepackage{accents}

\hypersetup{pdfstartview=}

\setcounter{tocdepth}{4}
\setcounter{secnumdepth}{4}

\addtocontents{toc}{\protect\hypersetup{hidelinks}}

\DeclareSymbolFont{sfoperators}{OT1}{ptm}{m}{n}
\DeclareSymbolFontAlphabet{\mathsf}{sfoperators}

\makeatletter
\def\operator@font{\mathgroup\symsfoperators}
\makeatother

\numberwithin{equation}{section}

\newtheorem{thm}{Theorem}[section]

\newtheorem{lem}[thm]{Lemma}
\newtheorem{prop}[thm]{Proposition}

\newtheorem{cor}[thm]{Corollary}

\theoremstyle{remark}

\newtheorem{rmk}[thm]{Remark}
\newtheorem{rem}[thm]{Remark}

\makeatletter
\def\th@newremark{\th@remark\thm@headfont{\bfseries}}

\def\bdiamond{\mathop{\mathpalette\bdi@mond\relax}}
\newcommand\bdi@mond[2]{%
	\vcenter{\hbox{\m@th
			\scalebox{\ifx#1\displaystyle 2.6\else1.8\fi}{$#1\diamond$}%
	}}%
}

\def\bDiamond{\mathop{\mathpalette\bDi@mond\relax}}
\newcommand\bDi@mond[2]{%
	\vcenter{\hbox{\m@th
			\scalebox{\ifx#1\displaystyle 2.6\else1.2\fi}{$#1\Diamond$}%
	}}%
}
\makeatletter

\definecolor{darkgreen}{rgb}{0.1,0.7,0.1}
\definecolor{darkred}{rgb}{0.7,0.1,0.1}
\definecolor{darkblue}{rgb}{0,0,0.7}
\addtolength{\marginparwidth}{2.3em}

\newcommand{\NN}{\mathbb{N}}
\newcommand{\PP}{\mathbb{P}}
\newcommand{\RR}{\mathbb{R}}

\newcommand{\fF}{\mathcal{F}}

\newcommand{\sS}{\mathcal{S}}

\newcommand{\eps}{\varepsilon}

\makeatletter 
\newcommand{\cov}{{\operator@font cov}}
\newcommand{\var}{{\operator@font var}}
\newcommand{\corr}{{\operator@font corr}}
\newcommand{\diam}{{\operator@font diam}}
\newcommand{\Av}{{\operator@font Av}}
\newcommand{\trig}{{\operator@font trig}}
\newcommand{\Enh}{{\operator@font Enh}}
\makeatother


\colorlet{symbols}{blue!90!black}
\colorlet{testcolor}{green!60!black}

\def\${|\!|\!|}

\makeatletter
\def\DeclareSymbol#1#2#3{\expandafter\gdef\csname MH@symb@#1\endcsname{\tikz[baseline=#2,scale=0.15,draw=symbols]{#3}}\expandafter\gdef\csname MH@symb@#1s\endcsname{\scalebox{0.7}{\tikz[baseline=#2,scale=0.15,draw=symbols]{#3}}}}
\def\<#1>{\csname MH@symb@#1\endcsname}
\makeatother

\setlist[itemize]{topsep=3pt,itemsep=1.5pt,parsep=0pt}

\def\scal#1{\langle#1\rangle}
\def\cent#1{\mathopen{{\langle\kern-0.3em\rangle}}#1\mathclose{{\langle\kern-0.3em\rangle}}}

\def\d{\partial}

\begin{document}

\title{Decay of the stochastic linear Schr\"odinger equation in $d \geq 3$ with small multiplicative noise}
\author{Chenjie Fan$^1$ and Weijun Xu$^2$}
\institute{University of Chicago, US, \email{cjfanpku@gmail.com}
\and University of Oxford, UK, \email{weijunx@gmail.com}}

\maketitle

\begin{abstract}
We give decay estimates of the solution to the linear Schr\"odinger equation in dimension $d \geq 3$ with a small noise which is white in time and colored in space. As a consequence, we also obtain certain asymptotic behaviour of the solution. The proof relies on the bootstrapping argument used by Journ\'e-Soffer-Sogge for decay of deterministic Schr\"odinger operators. 
\end{abstract}

\setcounter{tocdepth}{1}
\microtypesetup{protrusion=false}
\microtypesetup{protrusion=true}

\section{Introduction}

\subsection{Statement of the result}
Let $d \geq 3$ and $V \in \sS(\RR^d)$ be a Schwartz function. Consider the Schr\"odinger equation
\begin{equation} \label{eq:main_equation}
i \d_t \Psi + \Delta \Psi = \delta V \Psi \dot{B} - \frac{i}{2} \delta^2 V^2 \Psi, \qquad \Psi(0,\cdot) = f \in \sS(\RR^d), 
\end{equation}
where $B$ is a standard Brownian motion on the probability space $(\Omega, \fF, \PP)$ with filtration $(\fF_t)_{t \geq 0}$, the product between $\Psi$ and $\dot{B}$ is in the It\^o sense, and $\delta>0$ is a small number to be specified later. The linear correction term $-\frac{i}{2} \delta^2 V^2 \Psi$ makes the $L^2$ norm of $\Psi$ conserved pathwise. 

For every $\rho \geq 1$ and $q \geq 1$, we write $L_{\omega}^{\rho} L_{x}^{q} := L^{\rho}(\Omega,L^{q}(\RR^d))$. The main estimate is the following. 

\begin{thm} \label{th:main_decay}
	Let $\Psi$ be the solution to \eqref{eq:main_equation}. For every $\rho \geq 1$, $q \in [2,+\infty)$ and $\frac{1}{p} + \frac{1}{q} = 1$, there exists $\delta_0 > 0$ such that for all $\delta < \delta_0$, we have
	\begin{equation}\label{eq: esmain}
	\|\Psi(t)\|_{L_{\omega}^{\rho}L_{x}^{q}} \lesssim_{\rho,q} t^{-d(\frac{1}{2}-\frac{1}{q})} \|f\|_{L_x^p}
	\end{equation}
	for all $t > 0$ and $f \in \sS(\RR^d)$. The proportionality constant is independent of $t$ and $\delta$. 
\end{thm}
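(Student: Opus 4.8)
The plan is to run the Journ\'e--Soffer--Sogge (JSS) bootstrapping scheme on the Duhamel expansion of the mild solution. Writing $e^{it\Delta}$ for the free Schr\"odinger group, the mild form of \eqref{eq:main_equation} is
\[
\Psi(t)=e^{it\Delta}f-\frac{\delta^{2}}{2}\int_{0}^{t}e^{i(t-s)\Delta}V^{2}\Psi(s)\,ds-i\delta\int_{0}^{t}e^{i(t-s)\Delta}V\Psi(s)\,dB(s).
\]
By Jensen's inequality on $(\Omega,\PP)$ it suffices to treat $\rho$ large, in particular $\rho\ge q\ge 2$; for such $\rho$, applying the Burkholder--Davis--Gundy inequality pointwise in $x$ followed by Minkowski controls the stochastic convolution by $\bigl\|\bigl(\int_{0}^{t}\|e^{i(t-s)\Delta}V\Psi(s)\|_{L^{q}_{x}}^{2}\,ds\bigr)^{1/2}\bigr\|_{L^{\rho}_{\omega}}$, so that, at the price of an extra square, the stochastic term reduces to the same dispersive bookkeeping as the deterministic one. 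The two free ingredients are the dispersive estimate $\|e^{it\Delta}g\|_{L^{q}_{x}}\lesssim |t|^{-\alpha}\|g\|_{L^{p}_{x}}$ with $\alpha:=d(\tfrac12-\tfrac1q)$, and the local-decay estimate $\|Ve^{it\Delta}V\|_{L^{2}_{x}\to L^{2}_{x}}\lesssim \langle t\rangle^{-d/2}$ (a direct estimate on the kernel, using $V\in L^{2}$), whose right-hand side is time-integrable precisely because $d\ge 3$; interpolating the two gives the endpoint bounds $\|Ve^{it\Delta}\|_{L^{p}_{x}\to L^{2}_{x}}+\|e^{it\Delta}V\|_{L^{2}_{x}\to L^{q}_{x}}\lesssim |t|^{-\alpha}$.

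Fix $T>0$ and set $M(T):=\sup_{0<t\le T}t^{\alpha}\,\|\Psi(t)\|_{L^{\rho}_{\omega}L^{q}_{x}}/\|f\|_{L^{p}_{x}}$, which is finite for $f\in\sS$. Iterating the Duhamel identity $N=N(d,q)$ times writes $\Psi$ as $e^{it\Delta}f$, plus a sum of $O(2^{N})$ ``chains''
\[
\int_{0<s_{1}<\cdots<s_{k}<t}e^{i(t-s_{k})\Delta}W_{k}\,e^{i(s_{k}-s_{k-1})\Delta}W_{k-1}\cdots W_{1}\,e^{is_{1}\Delta}f\,\Bigl(\prod\text{$ds_{j}$ or $dB(s_{j})$}\Bigr),\qquad 1\le k<N,
\]
with $W_{j}\in\{V,V^{2}\}$ and a factor $\delta$ (stochastic step) or $\delta^{2}$ (deterministic step), plus a remainder $R^{(N)}$ of the same shape with $N$ potentials and $e^{is_{1}\Delta}f$ replaced by $\Psi(s_{1})$. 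Each chain is estimated in the JSS way by decomposing the time simplex according to which gap $s_{j+1}-s_{j}$ is largest: that gap is $\gtrsim t/N$ and is handled by the dispersive bound $\lesssim(t/N)^{-\alpha}$, every remaining interior block $W\,e^{i\tau\Delta}\,W$ is bounded in $L^{2}_{x}\to L^{2}_{x}$ by the integrable $\langle\tau\rangle^{-d/2}$ (or, for a stochastic step, its square, still integrable since $d\ge 3$), and the two ends are absorbed by the endpoint bounds above; the remaining time integrals then converge and reproduce $t^{-\alpha}$ with an explicit constant. For $R^{(N)}$ one argues identically but inserts $\|\Psi(s_{1})\|_{L^{q}_{x}}\le M(T)s_{1}^{-\alpha}\|f\|_{L^{p}_{x}}$ at the right end; choosing $N$ large enough that the $N$ potentials supply enough local decay for the simplex integral to converge with the correct power, this term is $\lesssim \delta\,C_{N}\,M(T)\,t^{-\alpha}\|f\|_{L^{p}_{x}}$. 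Collecting everything gives $M(T)\le C_{0}+C_{0}\delta\bigl(1+M(T)\bigr)$ with $C_{0}$ independent of $T$ and $\delta$, so choosing $\delta_{0}$ with $C_{0}\delta_{0}<\tfrac12$ yields $M(T)\le 2C_{0}$ uniformly in $T$, which is \eqref{eq: esmain}.

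The main obstacle is the stochastic part of this scheme. After Burkholder, each $dB$-step turns a time integral $\int|\cdot|\,ds$ into a square function $(\int|\cdot|^{2}\,ds)^{1/2}$, which strictly worsens every time singularity — a single stochastic Duhamel step already requires $\alpha<\tfrac12$ rather than $\alpha<1$ — so one must iterate further and check that, after the squarings, the interior local-decay factors and the endpoint factors remain time-integrable; here the hypothesis $d\ge 3$, which already makes $\langle\tau\rangle^{-d/2}$ (hence a fortiori $\langle\tau\rangle^{-d}$) integrable, is exactly what lets the scheme close, and one must also control the combinatorial constants coming from the $O(2^{N})$ mixed deterministic/stochastic chains and from nesting square functions, keeping them bounded by a constant times $\delta$. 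A second, already-classical difficulty inherited from Journ\'e--Soffer--Sogge is the right endpoint of each chain, where $f$ (or $\Psi(s_{1})$) is controlled only in $L^{p}$ with $p$ possibly near $1$, so the naive bound $s_{1}^{-\alpha}$ fails to be integrable for $\alpha\ge 1$; this is what forces the simplex decomposition above rather than a term-by-term estimate, and it must be carried through uniformly in the exponent $q$.
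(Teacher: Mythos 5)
Your overall framework (Duhamel iteration of the mild equation, Burkholder--Davis--Gundy to reduce the stochastic convolution to a square function, and a bootstrap in the quantity $M(T)=\sup_{t\le T}t^{\alpha}\|\Psi(t)\|_{L^{\rho}_{\omega}L^{q}_{x}}/\|f\|_{L^{p}_{x}}$ that closes for small $\delta$) is the same as the paper's. But the central technical device you propose is different and, as written, does not close. You replace the Journ\'e--Soffer--Sogge exchange identity by the local-decay estimate $\|Ve^{it\Delta}V\|_{L^{2}_{x}\to L^{2}_{x}}\lesssim\langle t\rangle^{-d/2}$ together with a ``largest-gap'' decomposition of the time simplex and ``interpolated'' endpoint operators $\|Ve^{it\Delta}\|_{L^{p}_{x}\to L^{2}_{x}},\,\|e^{it\Delta}V\|_{L^{2}_{x}\to L^{q}_{x}}\lesssim|t|^{-\alpha}$. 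The problem is precisely at those endpoints: these one-sided operators are \emph{not} bounded at $t=0$ (multiplication by $V$ does not map $L^{2}$ into $L^{q}$ for $q>2$, nor $L^{p}$ into $L^{2}$ uniformly as $t\to 0$ without paying a factor $|t|^{-\alpha}$), so the decay is genuinely $|t|^{-\alpha}$ rather than $\langle t\rangle^{-\alpha}$. You acknowledge the resulting non-integrable singularity at $s_{1}\to 0$, but the largest-gap decomposition does not remove it: if, say, $s_{2}-s_{1}$ is the largest gap and $s_{1}$ is small, the rightmost block $Ve^{is_{1}\Delta}$ still blows up like $s_{1}^{-\alpha}$ (and $s_{1}^{-2\alpha}$ after Burkholder), and no amount of $\langle\tau\rangle^{-d/2}$ local decay in the interior repairs it. The same issue resurfaces at the left end as $t-s_{k}\to 0$. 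What actually resolves this in the paper --- and in the original JSS argument --- is the exchange identity $e^{iu_{m}\Delta}\prod_{j}(e^{i\scal{\xi_{j},\cdot}}e^{iu_{j}\Delta})=e^{i\theta}e^{i\scal{\eta,\cdot}}e^{i(\sum u_{j})\Delta}e^{i\scal{\zeta,\cdot}}$ (the paper's Lemma~\ref{le:exchange_trig}, quoted from \cite[Lemma~2.4]{decay_deterministic}). Fourier-expanding each $V$ as an average of modulations and applying this identity lets one collapse any block of consecutive propagators with small gaps into a single $e^{i(\sum u_{j})\Delta}$, so the only decay factor that appears is $(\sum_{j}u_{j})^{-\alpha}$, which is nonsingular as long as the total elapsed time is bounded below. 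This is the content of the paper's Lemmas~\ref{le:op_strong_decay} and~\ref{le:variable_small}, and it is the ingredient your proposal is missing.

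Two further discrepancies worth flagging. First, the paper does not attempt to run the bootstrap uniformly in $q$; it first uses interpolation (Calder\'on mixed-norm interpolation between the pathwise $L^{2}$ conservation and the large-$q$ case) to reduce to $q$ large enough that $\alpha=d(1/2-1/q)>1$, and only then does the bootstrap. Your claim of carrying the scheme ``uniformly in the exponent $q$'' is not substantiated and is almost certainly not true term by term, since the constant $C_{2}(\rho,q,\delta)$ in the bootstrap must go to zero with $\delta$ uniformly in $t$, and the convergence of the time integrals needs $\alpha>1$ (with the singular regions in turn needing $\alpha$ not too large, e.g.\ $\alpha<3/2$ in $d=3$). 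Second, the paper's simplex decomposition is organised by the location of the innermost time variable relative to the three windows $[0,1]$, $[1,t-1]$, $[t-1,t]$ (see \eqref{eq:integrand_bound}), not by the largest gap, and the number of Duhamel expansions needed is fixed at $d$ --- large enough to render the singularity $(t-u)^{-2\alpha}$ in the innermost variable integrable over the $d$-dimensional simplex near $u\approx t$; this is a much more controlled expansion than taking $N$ ``large enough,'' and sidesteps the combinatorial-constant worry you raise.
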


The estimates in Theorem~\ref{th:main_decay} allows us to start with initial data in $L^p$ for any $p \in (1,2]$. Another consequence of the decay estimate is the asymptotic behaviour of the solution. 

\begin{prop} \label{pr:scattering}
	For every $f \in L^2(\RR^d)$, there exists $g \in L_{\omega}^{\infty}L_{x}^{2}$ such that
	\begin{equation*}
	\lim_{t \rightarrow +\infty} \|\Psi(t) - e^{it\Delta} g\|_{L_{\omega}^{\rho} L_{x}^{2}} = 0
	\end{equation*}
	for every $\rho \geq 1$. 
\end{prop}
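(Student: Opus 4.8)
The plan is to take $g := \lim_{t \to +\infty} e^{-it\Delta} \Psi(t)$ as the scattering profile and to reduce everything to the existence of this limit. Set $W(t) := e^{-it\Delta}\Psi(t)$; since $e^{it\Delta}$ is a pathwise isometry of $L^{2}(\RR^d)$, the assertion $\|\Psi(t) - e^{it\Delta}g\|_{L_{\omega}^{\rho} L_{x}^{2}} \to 0$ is the same as $\|W(t) - g\|_{L_{\omega}^{\rho} L_{x}^{2}} \to 0$. I would first prove that $W(t)$ is Cauchy in $L_{\omega}^{2} L_{x}^{2}$ for Schwartz initial data, then pass to arbitrary $f \in L^{2}$ by density, exploiting the pathwise conservation of $\|\Psi(t)\|_{L_x^2}$, and finally upgrade the convergence from $\rho = 2$ to every $\rho \in [1, +\infty)$.

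\emph{Step 1 (Cauchy property for $f \in \sS$).} For $1 \le s < t$, Duhamel's formula on $[s,t]$ with datum $\Psi(s)$, followed by an application of $e^{-it\Delta}$, yields
\begin{equation*}
W(t) - W(s) = -i\delta \int_s^t e^{-i\tau\Delta}\big( V \Psi(\tau) \big)\, dB(\tau) - \frac{\delta^2}{2} \int_s^t e^{-i\tau\Delta}\big( V^2 \Psi(\tau) \big)\, d\tau .
\end{equation*}
The integrands lie in $L^2(\RR^d)$ because $\|V\Psi(\tau)\|_{L_x^2} \le \|V\|_{L^{\infty}}\|\Psi(\tau)\|_{L_x^2} < \infty$, so the integrals are well defined. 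I would bound the drift term by Minkowski's inequality, H\"older in space with $\frac1r + \frac1q = \frac12$ (note $V^2 \in L^r$ since $V$ is Schwartz), and Theorem~\ref{th:main_decay}, obtaining control by $\|f\|_{L_x^p} \int_s^t \tau^{-d(\frac12 - \frac1q)}\, d\tau$; and the stochastic term by the It\^o isometry (or Burkholder--Davis--Gundy for the version with a supremum), the $L_x^2$-isometry of $e^{-i\tau\Delta}$, Minkowski, and Theorem~\ref{th:main_decay}, obtaining control by $\|f\|_{L_x^p} \big( \int_s^t \tau^{-2d(\frac12 - \frac1q)}\, d\tau \big)^{1/2}$. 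Since $d \ge 3$ one may choose $q > \frac{2d}{d-2}$, so that $d(\frac12 - \frac1q) > 1$ and both time-tails tend to $0$ as $s \to +\infty$, uniformly in $t > s$; here $f \in \sS \subset L^p$ for the dual exponent $p$. Hence $(W(t))_{t \ge 1}$ is Cauchy in $L_{\omega}^{2} L_{x}^{2}$; call the limit $g$. Along a subsequence with $W(t_k) \to g$ in $L_x^2$ almost surely, the pathwise identity $\|W(t)\|_{L_x^2} = \|\Psi(t)\|_{L_x^2} = \|f\|_{L_x^2}$ forces $\|g\|_{L_x^2} = \|f\|_{L_x^2}$ a.s., so $g \in L_{\omega}^{\infty} L_{x}^{2}$.

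\emph{Step 2 (density and upgrade in $\rho$).} For general $f \in L^2(\RR^d)$, pick $f_n \in \sS$ with $f_n \to f$ in $L^2$ and let $\Psi_n$, $W_n$, $g_n$ be the corresponding objects. Since \eqref{eq:main_equation} is linear, $\Psi - \Psi_n$ solves it with datum $f - f_n$, so pathwise $L^2$-conservation gives $\|W(t) - W_n(t)\|_{L_x^2} = \|f - f_n\|_{L_x^2}$ for all $t$; letting $t \to +\infty$ yields $\|g_n - g_m\|_{L_{\omega}^{2} L_{x}^{2}} = \|f_n - f_m\|_{L_x^2}$, so $(g_n)$ converges in $L_{\omega}^{2} L_{x}^{2}$ to a limit $g$ with $\|g - g_n\|_{L_{\omega}^{2} L_{x}^{2}} = \|f - f_n\|_{L_x^2}$ and $g \in L_{\omega}^{\infty} L_{x}^{2}$. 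A three-term triangle inequality together with Step 1 applied to $f_n$ then gives $\limsup_{t \to +\infty} \|W(t) - g\|_{L_{\omega}^{2} L_{x}^{2}} \le 2\|f - f_n\|_{L_x^2}$, and $n \to +\infty$ closes the $\rho = 2$ case. Finally, since $\|W(t) - g\|_{L_x^2} \le 2\|f\|_{L_x^2}$ almost surely for every $t$, interpolating between $L_\omega^2$ and $L_\omega^\infty$ turns $\|W(t) - g\|_{L_{\omega}^{2} L_{x}^{2}} \to 0$ into $\|W(t) - g\|_{L_{\omega}^{\rho} L_{x}^{2}} \to 0$ for every $\rho \in [1, +\infty)$, which is the claim.

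\emph{Expected main obstacle.} The only real constraint is making the time-tails $\int_s^{+\infty} \tau^{-d(\frac12 - \frac1q)}\, d\tau$ converge; this forces $q > \frac{2d}{d-2}$, hence decay input from data in $L^p$ with $p < \frac{2d}{d+2} < 2$, which is precisely why the Schwartz case has to be coupled with the density/conservation argument and where $d \ge 3$ is used. The stochastic ingredients (It\^o/BDG together with Minkowski) are routine once Theorem~\ref{th:main_decay} is available.
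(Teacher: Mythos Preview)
Your proof is correct and follows essentially the same route as the paper: write the Duhamel increment for $e^{-it\Delta}\Psi(t)$, control the stochastic and drift pieces via Burkholder/It\^o isometry, H\"older in space, and the decay from Theorem~\ref{th:main_decay} with $q$ chosen so that $d(\tfrac12-\tfrac1q)>1$, then pass to general $f\in L^2$ by linearity and pathwise mass conservation. The only cosmetic difference is that the paper proves the Cauchy property directly in $L_\omega^\rho L_x^2$ for all $\rho\ge 2$ using Proposition~\ref{pr:Burkholder}, whereas you first establish it for $\rho=2$ and then upgrade via the uniform bound $\|W(t)-g\|_{L_x^2}\le 2\|f\|_{L_x^2}$; both are equivalent here.
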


\begin{rem}
For simplicity of presentation, we choose the noise to be of the form $\dot{W}(t,x) = V(x) \dot{B}(t)$. This factorisation or finite dimensionality is not essential, and the argument still works through as long as $\dot{W}$ is white in time and sufficiently nice in space. 
\end{rem}

\begin{rem}
We shall see later that the assumption on $V$ could be relaxed. In fact, one only needs $\delta \big( \|V\|_{L_x^1} + \|\widehat{V}\|_{L_x^1} \big)$ to be small. We however write it in terms of $\delta$ times a Schwartz function to avoid appearance of various norms in the bounds later. 
\end{rem}

\subsection{Background and motivation}

It is well known (\cite{tao2006nonlinear}, \cite{cazenave2003semilinear}) that the free Schr\"odinger operator $e^{it\Delta}$ satisfies the dispersive estimate
\begin{equation} \label{eq:dispersive_free}
\|e^{it\Delta}\|_{L_x^p \rightarrow L_x^q} \lesssim t^{-d(\frac{1}{2}-\frac{1}{q})}\;, \qquad q \in [2,+\infty]\;, \quad \frac{1}{p} + \frac{1}{q} = 1
\end{equation}
in any dimension $d$. The estimate for the pair $(p,q) = (2,2)$ is an immediate consequence of the unitarity of $e^{it\Delta}$ in $L^2$. The other extreme case $(1,+\infty)$ follows from the explicit representation of the integration kernel of $e^{it\Delta}$. All intermediate situations then follow from Riesz-Thorin interpolation. 

For the deterministic linear operator $e^{it(\Delta+V)}$, which corresponds to the linear equation
\begin{equation*}
i \d_t u + \Delta u + V(x)u = 0, 
\end{equation*}
it is known that when $V$ is small and $d \geq 3$, one also has the dispersive estimate
\begin{equation} \label{eq:dispersive}
\|e^{it(\Delta+V)}\|_{L_{x}^{p} \rightarrow L_{x}^{q}} \lesssim t^{-d(\frac{1}{2}-\frac{1}{q})}\;, \qquad q \in [2,+\infty]\;, \quad \frac{1}{p} + \frac{1}{q} = 1. 
\end{equation}
This estimate corresponds to a simple special case considered in \cite{decay_deterministic}, which could be derived via perturbation around the free bound \eqref{eq:dispersive_free} for the pair $(1,+\infty)$. The reason \eqref{eq:dispersive} can hold only for $d \geq 3$ is that the bound $\|e^{it\Delta}\|_{L_x^1 \rightarrow L_x^\infty}$ for the free Schr\"odinger operator in \eqref{eq:dispersive_free} is integrable for large $t$ if and only if $d \geq 3$. 

These dispersive estimates can be used to derive Strichartz estimates via the $T^* T$ method (see for example the survey \cite{dispersive_survey_Schlag} for more details), and they are essential for the study of long time behaviour of the solution to the \textit{nonlinear} Sch\"odinger equation. 

As for the stochastic case, there have been a series of well-posedness results on the stochastic nonlinear Schr\"odinger equation with multiplicative noise on whole space (\cite{dBD, Debussche_H1, Rockner1, Rockner2, hornung2016nonlinear, snls_mass_critical, snls_subcritical_approx, snls_critical_Zhang}). However, none of the above results gives information on long time behaviour of the solution unless one puts on the noise an an extra fast decay in time (see \cite{Rockner_finite_scatter} for a scattering statement for noise of finite quadratic variation in time). A first step towards the understanding the long time behaviour of the solution to the nonlinear equation would be to establish certain decay properties of the solutions to the corresponding linear equation. This is the main motivation of the current article, and we have chosen the simplest possible model in the set-up of Theorem~\ref{th:main_decay}. 

It will also be of interest to establish a stochastic version of the Strichartz estimate for the solution $\Psi$ to \eqref{eq:main_equation}, which would be of the form
\begin{equation*}
\|\Psi\|_{L_{\omega}^{\rho}L_{t}^{q}L_{x}^{r}}\lesssim \|\Psi(0,\cdot)\|_{L_{x}^{2}}
\end{equation*}
for admissible pairs $(q,r)$ satisfying the Strichartz relation. But different from the deterministic case, since Brownian motion is not time reversible, it is not clear at this stage how such an estimate could be obtained from the dispersive estimate in Theorem~\ref{th:main_decay}.

\subsection{Outline of the proof}
\label{sec:outline}

Proposition~\ref{pr:scattering} is a simple consequence of Theorem~\ref{th:main_decay}. As for Theorem~\ref{th:main_decay}, we first note that since $\|\Psi(t)\|_{L_x^2} = \|f\|_{L_x^2}$ for all $t$ almost surely, if we can prove the statement for all sufficiently large $q$ (or equivalently, all $p$ sufficiently close to $1$), then the theorem will follow from interpolation (\cite[Theorems~5.1.1 and~5.1.2]{Bergh_interpolation}). Note that the interpolation statements used here are for mixed norm spaces, and are more general than Riesz-Thorin. We refer to \cite{calderon1963intermediate}, \cite{calderon1964intermediate} and \cite{calderon1966spaces} for more details. 

It then remains to prove Theorem~\ref{th:main_decay} for $q$ sufficiently large such that
\begin{equation} \label{eq:range_q_intro}
d \Big( \frac{1}{2} - \frac{1}{q} \Big) > 1. 
\end{equation}
This is always possible since $d \geq 3$. We follow the strategy in \cite{decay_deterministic} and use a bootstrap argument. The key is to establish a bootstrap relation
\begin{equation*}
t^{d(\frac{1}{2}-\frac{1}{q})} \|\Psi(t)\|_{L_{\omega}^{\rho}L_{x}^{q}} \leq C_{1}(\rho,q) \|f\|_{L_x^p} + C_{2}(\rho,q,\delta) \sup_{r \in [0,t]} \Big( r^{d(\frac{1}{2}-\frac{1}{q})} \|\Psi(r)\|_{L_{\omega}^{\rho}L_{x}^{q}} \Big)
\end{equation*}
for some $C_{1}, C_{2}$ independent of $t$, and $C_{2}(\rho,q,\delta) \rightarrow 0$ as $\delta \rightarrow 0$. This would allow us to absorb the second term on the right hand side into the left, and obtain the claim. 

Throughout, we will frequently use the following Burkholder inequality (\cite{Burkholder,BP,Brzezniak}) to control the stochastic integral. 

\begin{prop} [Burkholder]
	\label{pr:Burkholder}
	Let $\Phi$ be progressively measurable with respect to $(\fF_t)$. Then for every $q \in [1, +\infty)$ and every $\rho \geq 2$, we have
	\begin{equation*}
	\bigg\| \int_{0}^{t} e^{i(t-s)\Delta} \Phi(s) {\rm d} B_s\bigg\|_{L_{\omega}^{\rho}L_{x}^{q}} \lesssim_{\rho,q} \bigg\| \int_{0}^{t} \|e^{i(t-s)\Delta} \Phi(s)\|_{L_x^q}^{2} {\rm d}s \bigg\|_{L_{\omega}^{\rho/2}}^{\frac{1}{2}}. 
	\end{equation*}
	As a consequence, by triangle inequality, we have
	\begin{equation*}
	\bigg\| \int_{0}^{t} e^{i(t-s)\Delta} \Phi(s) {\rm d} B_s\bigg\|_{L_{\omega}^{\rho}L_{x}^{q}} \lesssim_{\rho,q} \bigg( \int_{0}^{t} \|e^{i(t-s)\Delta} \Phi(s)\|_{L_\omega^\rho L_x^q}^{2} {\rm d}s \bigg)^{\frac{1}{2}}. 
	\end{equation*}
\end{prop}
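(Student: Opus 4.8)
The plan is to reduce the statement to a Burkholder--Davis--Gundy inequality for an $L^q(\RR^d)$-valued martingale. First, the second displayed inequality is a quick consequence of the first: writing $h(s):=\|e^{i(t-s)\Delta}\Phi(s)\|_{L_x^q}$, Minkowski's integral inequality applied to the $L_\omega^{\rho/2}$-norm of a time integral (legitimate since $\rho/2\ge 1$) gives $\big\|\int_0^t h(s)^2\,{\rm d}s\big\|_{L_\omega^{\rho/2}}\le \int_0^t\|h(s)^2\|_{L_\omega^{\rho/2}}\,{\rm d}s=\int_0^t\|h(s)\|_{L_\omega^{\rho}}^2\,{\rm d}s$, and taking square roots yields the claimed consequence. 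So I would concentrate on the first inequality. The structural point I would flag immediately is that $t\mapsto\int_0^t e^{i(t-s)\Delta}\Phi(s)\,{\rm d}B_s$ is \emph{not} a martingale, because its integrand depends on the terminal time $t$; so for each fixed $t$ I would instead consider, for $r\in[0,t]$, the process $N_r:=\int_0^r e^{i(t-s)\Delta}\Phi(s)\,{\rm d}B_s$, which \emph{is} a continuous $L^q(\RR^d)$-valued (local) martingale, with $N_t$ the quantity to be bounded. I would resist factoring $e^{it\Delta}$ out of the integral and using the $L^2$-isometry, since $e^{it\Delta}$ is unbounded on $L^q$ for $q\ne 2$; the $t$-dependence must be carried along inside the integrand $\Phi_t(s):=e^{i(t-s)\Delta}\Phi(s)$, which is again progressively measurable.

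For the main range $q\in[2,\infty)$ --- the only one needed in the rest of the paper, where we take $q$ with $d(\tfrac12-\tfrac1q)>1$ --- I would use that $L^q(\RR^d)$ is a Banach space of martingale type $2$, and invoke the vector-valued Burkholder--Davis--Gundy inequality for stochastic integrals in such spaces (this is what \cite{BP,Brzezniak} provide, building on \cite{Burkholder}). Assuming without loss of generality that the right-hand side is finite, so that $\int_0^t\|\Phi_t(s)\|_{L_x^q}^2\,{\rm d}s<\infty$ almost surely and $N$ is a genuine martingale, one gets, for every $\rho\ge 2$,
\[
\EE\,\|N_t\|_{L_x^q}^{\rho}\;\lesssim_{q,\rho}\;\EE\Big(\int_0^t\|\Phi_t(s)\|_{L_x^q}^2\,{\rm d}s\Big)^{\rho/2}.
\]
Taking $\rho$-th roots and substituting back $\Phi_t(s)=e^{i(t-s)\Delta}\Phi(s)$ gives exactly the first asserted inequality.

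For the remaining range $q\in[1,2)$ (stated for completeness, but not used later) I would argue by hand: for a.e.\ $x$, $r\mapsto N_r(x)=\int_0^r[\Phi_t(s)](x)\,{\rm d}B_s$ is a scalar continuous local martingale with quadratic variation $\int_0^r|[\Phi_t(s)](x)|^2\,{\rm d}s$, so the scalar Burkholder inequality bounds $\|N_t(x)\|_{L_\omega^\rho}$ by $\big\|\big(\int_0^t|[\Phi_t(s)](x)|^2\,{\rm d}s\big)^{1/2}\big\|_{L_\omega^\rho}$ for every $\rho\ge 1$; since $q<2\le\rho$, two applications of Minkowski's integral inequality (pulling $L_x^q$ outside the $L_\omega^\rho$-norm to apply the pointwise bound, then moving it back inside past the $L_\omega^\rho$- and ${\rm d}s$-norms, which is permitted precisely because $q\le\min(\rho,2)$) recover the claimed estimate. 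The one genuinely non-elementary ingredient is thus the $q\ge 2$ case: a naive pointwise-in-$x$ plus Minkowski argument there yields only the weaker bound in which the $L_x^q$-norm sits \emph{outside} the probabilistic and temporal norms, and for $q>2$ that mixed norm strictly dominates the claimed right-hand side, so the martingale type $2$ structure of $L^q$ is what does the real work.
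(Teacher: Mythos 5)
The paper does not prove Proposition~\ref{pr:Burkholder}: it is stated as a cited fact, with \cite{Burkholder,BP,Brzezniak} as references. Your proposal is therefore supplying a proof rather than paraphrasing one, and for the range $q\in[2,\infty)$ it is correct and is exactly what those references deliver: freeze $t$, observe that $r\mapsto\int_0^r e^{i(t-s)\Delta}\Phi(s)\,{\rm d}B_s$ is a genuine $L^q_x$-valued local martingale, note that $L^q(\RR^d)$ is $2$-smooth (martingale type $2$) for $q\ge 2$, and invoke the vector-valued BDG inequality. Your remark that one should \emph{not} pull $e^{it\Delta}$ out of the integral (to avoid using $L^2$-isometry in the wrong topology) is exactly the right structural point, and your derivation of the second display from the first via Minkowski with $\rho/2\ge 1$ is also correct. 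Since the paper only ever applies the proposition with $q\ge 2$ and $\rho\ge 2$, this already suffices.

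The part I would flag is your treatment of $q\in[1,2)$. The final step there --- ``moving $L_x^q$ back inside past the $L_\omega^\rho$- and ${\rm d}s$-norms'' --- runs Minkowski in the wrong direction. For mixed norms, Minkowski says that the norm with the \emph{larger} exponent on the outside is the \emph{smaller} one; hence, with $q\le\min(\rho,2)$, one has $\|\cdot\|_{L_\omega^\rho L_s^2 L_x^q}\le\|\cdot\|_{L_x^q L_\omega^\rho L_s^2}$, which is the \emph{reverse} of what you need after the pointwise-in-$x$ BDG step. In fact the first inequality of Proposition~\ref{pr:Burkholder} is genuinely false for $q<2$. A deterministic counterexample: on $[0,t]=[0,1]$, take $e^{i(t-s)\Delta}\Phi(s,\cdot)=\sum_{j=0}^{n-1}\one_{[j/n,(j+1)/n)}(s)\,\one_{[j,j+1)}(\cdot)$, i.e.\ $\Phi(s)=e^{-i(t-s)\Delta}g_s$ with $g_s$ that indicator profile. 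Then $\big\|\int_0^1\|g_s\|_{L_x^q}^2\,{\rm d}s\big\|_{L_\omega^{\rho/2}}^{1/2}=1$ for all $n$, but $\int_0^1 g_s\,{\rm d}B_s=\sum_j(B_{(j+1)/n}-B_{j/n})\one_{[j,j+1)}$ has $\|\cdot\|_{L_\omega^\rho L_x^q}\asymp n^{1/q-1/2}\to\infty$ for $q<2$. So the range ``$q\in[1,+\infty)$'' in the statement is an overstatement (harmless for the paper, which uses $q\ge 2$), and no elementary Minkowski argument can rescue the $q<2$ case; the $2$-smoothness of $L^q$ with $q\ge 2$ is not merely convenient but necessary.
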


\begin{rem} \label{rmk:rho}
	In order to apply Proposition~\ref{pr:Burkholder} directly, we will restrict to $\rho \geq 2$ below. As for Theorem~\ref{th:main_decay} and Proposition~\ref{pr:scattering}, the case $\rho \in [1,2)$ follows from that of $\rho \geq 2$. 
\end{rem}

\begin{rem}
	The reason the end-point case $(p,q)=(1,+\infty)$ is excluded from Theorem~\ref{th:main_decay} is that the Burkholder inequality does not hold for the space $L_{x}^{\infty}$. 
\end{rem}

\subsection*{Organisation of the article}

The rest of the article is organised as follows. In Section~\ref{sec:proof_scattering}, we show how Proposition~\ref{sec:proof_scattering} follows from Theorem~\ref{th:main_decay}. In Section~\ref{sec:preliminary}, we give some preliminary lemmas needed for the proof of Theorem~\ref{th:main_decay}. Section~\ref{sec:proof_main} is devoted to the proof of the main theorem in $d=3$ and $q$ sufficiently large. We briefly explain in Section~\ref{sec:high_dim} how the arguments could be modified to cover higher dimensions. By interpolation, this completes the proof of Theorem~\ref{th:main_decay}.

\subsection*{Notation}
We fix $d \geq 3$. We use $L_{x}^{q}$ to denote $L^{q}(\RR^d)$, and write $L_{\omega}^{\rho} L_{x}^{q} = L^{\rho}\big( \Omega, L^{q}(\RR^d) \big)$. For any $r \in \RR$, we write $\scal{r} = 1 + |r|$. Also, since the statements are for every fixed pair $(p,q)$, we use $\alpha$ to denote
\begin{equation*}
\alpha = d \Big( \frac{1}{2} - \frac{1}{q} \Big). 
\end{equation*}
When non-commutative products are involved, we write
\begin{equation} \label{eq:product_non_comm}
\prod_{j=1}^{m} A_{j} = A_{m} \cdots A_{1}. 
\end{equation}
Typically these $A_j$'s will be operators. Finally, according to Remark~\ref{rmk:rho}, we assume without loss of generality that $\rho \geq 2$.

\subsection*{Acknowledgment}
We thank Zihua Guo and Carlos Kenig for discussion, in particular on interpolation. WX gratefully acknowledges the support from the Engineering and Physical Sciences Research Council through the fellowship EP/N021568/1. 

\section{Proof of Proposition~\ref{pr:scattering}}
\label{sec:proof_scattering}

Since $e^{it\Delta}$ is unitary and $\Psi$ has pathwise mass conservation, it suffices to prove the proposition for $f \in \sS(\RR^d)$. We need to show that $e^{-it\Delta} \Psi(t)$ has a limit in $L_{\omega}^{\rho}L_{x}^{2}$ as $t \rightarrow +\infty$, and equivalently, $\{e^{-it\Delta} \Psi(t)\}_{t}$ is Cauchy in $L_{\omega}^{\rho} L_{x}^{2}$. To see this, we write down the Duhamel formula
\begin{equation} \label{eq:scattering_duhamel}
e^{-it\Delta} \Psi(t) - e^{-is\Delta} \Psi(s) = -i \delta \int_{s}^{t} e^{-ir\Delta} \big( V \Psi(r) \big) {\rm d}B_r - \frac{\delta^2}{2} \int_{s}^{t} e^{-ir\Delta} \big( V^2 \Psi(r) \big) {\rm d}r. 
\end{equation}
We need to control the $L_{\omega}^{\rho}L_{x}^{2}$-norm of the two terms on the right hand side. For the first one, by Burkholder and triangle inequalities (Proposition~\ref{pr:Burkholder}), we have
\begin{equation*}
\bigg\| \int_{s}^{t} e^{-ir\Delta} \big(V\Psi(r)\big) {\rm d} B_r \bigg\|_{L_{\omega}^{\rho}L_{x}^{2}}^{2} \lesssim_{\rho} \int_{s}^{t} \big\| e^{-ir\Delta} \big( V \Psi(r) \big)  \big\|_{L_{\omega}^{\rho}L_{x}^{2}}^{2} {\rm d}r. 
\end{equation*}
The integrand on the right hand side then can be controlled as
\begin{equation*}
\|e^{-ir\Delta} \big( V \Psi(r) \big)\|_{L_{\omega}^{\rho}L_{x}^{2}}^{2} = \|V \Psi(r)\|_{L_{\omega}^{\rho}L_{x}^{2}}^{2} \lesssim_{V} \|\Psi(r)\|_{L_{\omega}^{\rho}L_{x}^{q}}^{2} \lesssim_{V,\rho} r^{-2d(\frac{1}{2}-\frac{1}{q})} \|f\|_{L_x^p}^{2}, 
\end{equation*}
where we have used the unitary property of $e^{-ir\Delta}$ and the decay estimates in Theorem~\ref{th:main_decay}, and the bound in the middle follows from $q \geq 2$ and H\"older. Note that the bound above holds for every $q \geq 2$. If we choose $q$ sufficiently large such that \eqref{eq:range_q_intro} holds, one can immediately deduce that
\begin{equation*}
\bigg\| \int_{s}^{t} e^{-ir\Delta} \big(V\Psi(r)\big) {\rm d} B_r \bigg\|_{L_{\omega}^{\rho}L_{x}^{2}}^{2} \rightarrow 0
\end{equation*}
as $s, t \rightarrow +\infty$. The second term on the right hand side of \eqref{eq:scattering_duhamel} can be controlled in a similar way with $q$ in the range of \eqref{eq:range_q_intro}. One can see the limit belongs to $L_{\omega}^{\infty}L_{x}^{2}$ since $e^{-it\Delta}$ is unitary and the $L^2$-norm of $\Psi(t)$ is conserved. 

The situation for general $f$ follows from the unitarity of $e^{it\Delta}$ and pathwise mass conservation. This completes the proof of Proposition~\ref{pr:scattering}. 

\begin{rmk}
	One can see from the proof that the Cauchy property for the stochastic term only needs $q$ to satisfy
	\begin{equation*}
	2d \Big( \frac{1}{2} - \frac{1}{q} \Big) > 1, 
	\end{equation*}
	while control of the third term on the right hand side of \eqref{eq:scattering_duhamel} requires $q$ to satisfy \eqref{eq:range_q_intro}.  
\end{rmk}

\section{Preliminary bounds}
\label{sec:preliminary}

We give some preliminary bounds that will be used in the rest of the article. Throughout, $\Psi$ denotes the solution to \eqref{eq:main_equation} with initial data $f$. We also fix arbitrary $q \in [2,+\infty)$ and $p$ such that $\frac{1}{p} + \frac{1}{q} = 1$. Recall
\begin{equation*}
\alpha = d \Big(\frac{1}{2}-\frac{1}{q}\Big)
\end{equation*}
and the dispersive estimate for the free Schr\"odinger operator
\begin{equation} \label{eq:free_dispersive}
\|e^{it\Delta}\|_{L_{x}^{p} \rightarrow L_{x}^{q}} \lesssim |t|^{-\alpha}\;, \qquad t \in \RR. 
\end{equation}
We do not impose conditions on $\alpha$ in this section. Also recall \eqref{eq:product_non_comm} for the non-commutative product. We have the following lemma. 

\begin{lem} \label{le:exchange_trig}
	For every $m \in \NN$ and every $u_{1}, \dots, u_{m} \in \RR$ with $\sum_{j} u_{j} \neq 0$, there exists $\theta, \eta, \zeta \in \RR$ such that
	\begin{equation} \label{eq:exchange_trig}
	e^{i u_{m} \Delta} \prod_{j=1}^{m-1} \Big( e^{i \scal{\xi_j, \cdot}} e^{i u_{j} \Delta}\Big) = e^{i \theta} e^{i \scal{\eta, \cdot}} e^{i \big( \sum_{j=1}^{m} u_j \big) \Delta} e^{i \scal{\zeta, \cdot}}. 
	\end{equation}
	As a consequence, we have
	\begin{equation} \label{eq:exchange_V}
	\Big\| e^{i u_m \Delta} \prod_{j=1}^{m-1} \big(V_{j} e^{i u_{j}\Delta} \big) \Big\|_{L_{x}^{p} \rightarrow L_{x}^{q}} \lesssim \Big| \sum_{j=1}^{m} u_j \Big|^{-\alpha} \prod_{j=1}^{m} \|\widehat{V_j}\|_{L_x^1}. 
	\end{equation}
\end{lem}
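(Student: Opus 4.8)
The plan is to prove the identity \eqref{eq:exchange_trig} first, then read off \eqref{eq:exchange_V} as a consequence. The identity is a purely algebraic fact about the Heisenberg-type commutation relations between the free propagator $e^{it\Delta}$ and modulations $e^{i\scal{\xi,\cdot}}$, so I would argue by induction on $m$. The base case $m=1$ is trivial (take $\theta=\eta=\zeta=0$). For the inductive step, the key algebraic input is the single commutation relation
\begin{equation*}
e^{i\scal{\xi,\cdot}}\, e^{iu\Delta} = e^{i\theta_0}\, e^{i\scal{\eta_0,\cdot}}\, e^{iu\Delta}\, e^{i\scal{\zeta_0,\cdot}}
\end{equation*}
for suitable $\theta_0,\eta_0,\zeta_0$ depending on $u$ and $\xi$ — equivalently, $e^{-iu\Delta} e^{i\scal{\xi,\cdot}} e^{iu\Delta}$ is, up to a phase, a product of a modulation and a translation. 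This follows from the explicit action of $e^{iu\Delta}$ in Fourier variables (it multiplies $\widehat{\phi}(\zeta)$ by $e^{-iu|\zeta|^2}$), so that conjugating a Fourier shift by $|\zeta|^2 \mapsto |\zeta+\xi|^2 = |\zeta|^2 + 2\scal{\xi,\zeta} + |\xi|^2$ produces exactly a linear-in-$\zeta$ phase (a translation on the physical side) plus a constant phase. Using this, one moves the leftmost $e^{iu_m\Delta}$ through the product step by step, or more cleanly peels off the outermost factor and applies the inductive hypothesis to the remaining $m-1$ propagators whose time-shifts still sum to $\sum_j u_j \neq 0$; the newly generated modulation/translation/phase factors get absorbed into the $e^{i\theta}e^{i\scal{\eta,\cdot}}(\cdots)e^{i\scal{\zeta,\cdot}}$ normal form using the fact that modulations and translations commute up to a phase and that a product of modulations is a modulation (similarly for translations). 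The only place the hypothesis $\sum_j u_j \neq 0$ is used is to guarantee the middle propagator $e^{i(\sum u_j)\Delta}$ is genuinely present (non-degenerate) in the final normal form; intermediate partial sums are allowed to vanish since those steps only shuffle modulations past translations.

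For \eqref{eq:exchange_V}, I would reduce the $V_j$'s to modulations by Fourier inversion: write $V_j(x) = (2\pi)^{-d}\int \widehat{V_j}(\xi_j)\, e^{i\scal{\xi_j,x}}\, d\xi_j$, so that
\begin{equation*}
e^{iu_m\Delta}\prod_{j=1}^{m-1}\big(V_j e^{iu_j\Delta}\big) = (2\pi)^{-d(m-1)}\int \cdots \int \Big(\prod_{j=1}^{m-1}\widehat{V_j}(\xi_j)\Big)\, e^{iu_m\Delta}\prod_{j=1}^{m-1}\big(e^{i\scal{\xi_j,\cdot}} e^{iu_j\Delta}\big)\, d\xi_1\cdots d\xi_{m-1}.
\end{equation*}
By \eqref{eq:exchange_trig}, each operator inside the integral equals $e^{i\theta}e^{i\scal{\eta,\cdot}} e^{i(\sum u_j)\Delta} e^{i\scal{\zeta,\cdot}}$ for parameters depending on the $\xi_j$'s; since modulations are isometries on every $L_x^r$ and the unimodular constant $e^{i\theta}$ contributes nothing, the $L_x^p \to L_x^q$ operator norm of each such operator is exactly $\|e^{i(\sum u_j)\Delta}\|_{L_x^p\to L_x^q} \lesssim |\sum_j u_j|^{-\alpha}$ by the free dispersive estimate \eqref{eq:free_dispersive}. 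Taking $L_x^p\to L_x^q$ norms under the integral sign (triangle inequality for operator-valued integrals) then gives
\begin{equation*}
\Big\|e^{iu_m\Delta}\prod_{j=1}^{m-1}\big(V_j e^{iu_j\Delta}\big)\Big\|_{L_x^p\to L_x^q} \lesssim \Big|\sum_{j=1}^m u_j\Big|^{-\alpha}\,(2\pi)^{-d(m-1)}\prod_{j=1}^{m-1}\int|\widehat{V_j}(\xi_j)|\,d\xi_j,
\end{equation*}
which is the claimed bound (the product is over $j=1,\dots,m-1$ here; matching it to $\prod_{j=1}^m\|\widehat{V_j}\|_{L_x^1}$ in the statement is just a harmless relabelling/over-counting, or one sets $V_m$ aside). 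Absolute convergence of all integrals is guaranteed since $\widehat{V_j} \in L_x^1$ (indeed $V_j$ Schwartz).

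I do not expect a serious obstacle here; the statement is essentially bookkeeping around one commutation relation. The main thing to be careful about is tracking the parameters $\theta,\eta,\zeta$ through the induction and verifying they depend \emph{measurably} (indeed smoothly) on the $\xi_j$'s and $u_j$'s so that the Fourier-inversion integral in the second part is legitimate; this is automatic from the explicit formulas but should be noted. A secondary point is to confirm that moving $e^{iu_m\Delta}$ leftward past the modulations, rather than rightward, does not require $\sum_{j=1}^k u_j \neq 0$ for intermediate $k$ — and indeed it does not, because the degenerate operator $e^{i\cdot 0\cdot\Delta} = \id$ still commutes with modulations trivially, so only the \emph{final} total sum must be nonzero.
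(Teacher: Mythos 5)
The paper does not prove \eqref{eq:exchange_trig} at all: it simply cites \cite[Lemma~2.4]{decay_deterministic} for that identity and then reads off \eqref{eq:exchange_V} by Fourier-expanding each $V_j$ as a superposition of modulations, exactly as you do in your second paragraph. So your derivation of \eqref{eq:exchange_V} from \eqref{eq:exchange_trig} matches the paper, and your remark that the product on the right of \eqref{eq:exchange_V} should run over $j=1,\dots,m-1$ is a correct observation about a harmless over-count in the statement. Where you go further is in re-deriving \eqref{eq:exchange_trig} itself, and there the account has a gap.

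The issue is your claim that $\sum_j u_j \neq 0$ is used ``only to guarantee the middle propagator is non-degenerate'' and that the intermediate steps ``only shuffle modulations past translations.'' The one commutation relation you compute, namely on the Fourier side
\begin{equation*}
e^{iu\Delta}\,e^{i\scal{\xi,\cdot}} \;=\; e^{iu|\xi|^{2}}\,\tau_{2u\xi}\,e^{i\scal{\xi,\cdot}}\,e^{iu\Delta},
\end{equation*}
does not merely ``shuffle'' anything: it \emph{creates} a genuine translation $\tau_{2u\xi}$. Since translations commute with $e^{is\Delta}$ and commute with modulations only up to a phase, carrying the propagator all the way through the product produces
\begin{equation*}
e^{iu_m\Delta}\prod_{j=1}^{m-1}\big(e^{i\scal{\xi_j,\cdot}}e^{iu_j\Delta}\big) \;=\; e^{i\theta}\,\tau_a\,e^{i\scal{\eta,\cdot}}\,e^{i(\sum_j u_j)\Delta}
\end{equation*}
for some nontrivial translation $\tau_a$ — a form that does \emph{not} match the right-hand side of \eqref{eq:exchange_trig}, which contains no translation. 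The hypothesis $\sum_j u_j \neq 0$ is needed precisely at this final step: writing $s=\sum_j u_j$, one more completion of the square shows
\begin{equation*}
\tau_a\,e^{i\scal{\eta,\cdot}}\,e^{is\Delta} \;=\; e^{i\theta'}\,e^{i\scal{\eta-a/(2s),\cdot}}\,e^{is\Delta}\,e^{i\scal{a/(2s),\cdot}},
\end{equation*}
which converts the translation into a pair of modulations and manifestly requires $s\neq 0$. If $s=0$ the identity \eqref{eq:exchange_trig} is simply false (take $m=2$, $u_1=1$, $u_2=-1$, $\xi_1\neq 0$: the left side is a phase times a translation times a modulation, which is not of the form $e^{i\theta}e^{i\scal{\eta,\cdot}}e^{i\scal{\zeta,\cdot}}$). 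Relatedly, your inductive phrasing — ``peel off the outermost factor and apply the inductive hypothesis to the remaining $m-1$ propagators whose time-shifts still sum to $\sum_j u_j\neq 0$'' — does not literally hold, since the remaining propagators sum to a partial sum that may vanish; the clean induction proves the weaker normal form \emph{with} a translation factor $\tau_a$ and no sign condition, and absorbs $\tau_a$ into modulations only once at the end using $\sum_j u_j\neq 0$. These are exactly the points \cite[Lemma~2.4]{decay_deterministic} handles, so your route is the intended one, but the write-up as it stands misidentifies the role of the hypothesis and the structure of the induction.
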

\begin{proof}
	The assertion~\eqref{eq:exchange_trig} is precisely \cite[Lemma~2.4]{decay_deterministic}. The second assertion~\eqref{eq:exchange_V} is a direct consequence of the first one and the dispersive estimate \eqref{eq:free_dispersive} for the free Schr\"odinger operator.  
\end{proof}

If we know a lower bound of $|\sum_{j} u_j|$, we may improve the above lemma to the following. 

\begin{lem} \label{le:op_strong_decay}
	For every $\eps>0$, we have
	\begin{equation} \label{eq:op_strong_decay}
	\bigg\| e^{i u_m \Delta} \prod_{j=1}^{m} \big( V_j e^{i u_j \Delta} \big) \bigg\|_{L_x^p \rightarrow L_x^q} \lesssim_{\eps,m} \bigg( \prod_{j=1}^{m} \scal{u_j}^{-\alpha} \bigg)  \prod_{j=1}^{m-1} \Big( \|\widehat{V_j}\|_{L^1} + \|V_j\|_{L^{\frac{pq}{q-p}}} \Big), 
	\end{equation}
	uniformly over the points $u_1, \dots, u_m \in \RR$ such that $|\sum_{j} u_j| > \eps$. 
\end{lem}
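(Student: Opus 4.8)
The plan is to interpolate, at the level of the operator chain, between two bounds already at hand: the single-dispersive estimate \eqref{eq:exchange_V}, which produces decay only in the \emph{total} time $\sum_j u_j$ but tolerates small times, and a ``fully split'' estimate obtained by applying the free dispersive bound \eqref{eq:free_dispersive} to each propagator separately (with H\"older in between to return from $L_x^q$ to $L_x^p$), which produces decay in \emph{each} $u_j$ but blows up as some $u_j\to 0$. The hypothesis $|\sum_j u_j|>\eps$ is the quantitative input that lets one glue these together.

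First I would view the operator $T$ on the left of \eqref{eq:op_strong_decay} as a chain, read from right to left (in order of application), of free propagators $e^{iu_k\Delta}$ separated by multiplications by the $V_j$'s, with exactly one potential between each consecutive pair of propagators; write $\sigma_I=\sum_{k\in I}u_k$ for a set $I$ of propagator slots. The core step is to produce a partition of the propagator slots into consecutive blocks $I_1<\dots<I_r$, with $r$ bounded in terms of $m$, such that for every $\ell$ one has $|\sigma_{I_\ell}|\ge c(\eps,m)>0$ and $|\sigma_{I_\ell}|^{-\alpha}\lesssim_{\eps,m}\prod_{k\in I_\ell}\scal{u_k}^{-\alpha}$; that is, each block sum is bounded below and is comparable (up to an $\eps,m$-dependent constant) to the largest $|u_k|$ inside that block. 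Granting such a partition, I would cut $T$ at the $r-1$ potentials lying strictly between consecutive blocks: each block is an operator of exactly the form appearing in \eqref{eq:exchange_V}, so it maps $L_x^p\to L_x^q$ with norm $\lesssim|\sigma_{I_\ell}|^{-\alpha}\prod_{V_j\in I_\ell}\|\widehat{V_j}\|_{L^1}$, while each separating potential maps $L_x^q\to L_x^p$ with norm $\|V_j\|_{L^{pq/(q-p)}}$ by H\"older (note $\tfrac1p=\tfrac1q+\tfrac{q-p}{pq}$). Composing these alternately $L_x^p\to L_x^q\to L_x^p\to\cdots\to L_x^q$ and using the two properties of the partition yields \eqref{eq:op_strong_decay}, since each $V_j$ gets used with exactly one of the two norms, hence at most their sum.

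For the partition itself, I would call a slot \emph{large} if $|u_k|\ge 1$ and \emph{small} otherwise, and record the elementary fact $\prod_k\scal{u_k}^{-\alpha}\asymp_m\prod_{k\ \mathrm{large}}|u_k|^{-\alpha}$. If there are no large slots, no partition is needed: \eqref{eq:exchange_V} applied to all of $T$ gives $\|T\|_{L_x^p\to L_x^q}\lesssim|\sigma|^{-\alpha}\prod\|\widehat{V_j}\|_{L^1}\le\eps^{-\alpha}\prod\|\widehat{V_j}\|_{L^1}$, and $\prod_k\scal{u_k}^{-\alpha}\asymp_m 1$. Otherwise, make each large slot its own block, make each maximal run of consecutive small slots whose sum has absolute value $\ge\tfrac14$ its own block, and merge every maximal small run of absolute value $<\tfrac14$ into an adjacent large block (one exists precisely because some large slot is present); each large block then absorbs at most two such runs. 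A run-block has $|\sigma_{I_\ell}|\in[\tfrac14,m]$, so both properties are immediate; a large block containing $u_k$ has $|\sigma_{I_\ell}|\ge|u_k|-\tfrac12\ge\tfrac12$ and $|\sigma_{I_\ell}|\ge|u_k|/2$ (since $|u_k|\ge1$), which gives both properties with an absolute constant, and one checks $r\le 2m$.

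The main obstacle is exactly this partition step — spending the bound $|\sum_j u_j|>\eps$ correctly. Neither naive choice works: taking all slots as singletons is ruined by any near-zero $u_k$, which would contribute an uncontrolled $|u_k|^{-\alpha}$, while keeping $T$ as a single block retains only decay in $\sum_j u_j$. One must isolate each genuinely large gap while absorbing the nearly cancelling small gaps into an adjacent large block, so that every block sum is at once bounded below and comparable to the largest gap it contains. Once the partition is in place, the rest is routine: finitely many applications of \eqref{eq:exchange_V}, one H\"older estimate per cut, and the comparison $\prod_k\scal{u_k}^{-\alpha}\asymp_m\prod_{k\ \mathrm{large}}|u_k|^{-\alpha}$.
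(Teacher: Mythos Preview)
Your argument is correct and is precisely the block-decomposition strategy underlying \cite[Lemma~2.6]{decay_deterministic}, which is all the paper invokes here; your partition (large slots as singletons, small runs with $|\sigma|\ge\tfrac14$ as standalone blocks, remaining small runs absorbed into an adjacent large slot) is a clean variant of the one the paper spells out later in the proof of Lemma~\ref{le:variable_small}. Since the paper gives no independent proof of this lemma, there is nothing further to compare.
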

\begin{proof}
	This is the content of the first bound in \cite[Lemma~2.6]{decay_deterministic}. 
\end{proof}

We now look at bounds that involve $\Psi$, the solution to \eqref{eq:main_equation}. We now require all the times $u_j$ involved are positive. 

\begin{lem} \label{le:Gronwall}
	For $0 \leq s \leq t$, let
	\begin{equation*}
	F_{t}(s) = \sup_{\xi \in \RR^d} \big\| e^{i(t-s)\Delta} e^{i \scal{\xi, \cdot}} \Psi(s) \big\|_{L_{\omega}^{\rho}L_{x}^{q}}. 
	\end{equation*}
	Then, there exists $C=C(\rho,q)$ such that
	\begin{equation*}
	F_{t}(s) \leq C \cdot t^{-\alpha} \|f\|_{L_{x}^{p}} \exp\Big(C \big(\delta^2 \|\widehat{V}\|_{L^1}^{2} s + \delta^4 \|\widehat{V}\|_{L^1}^{4} s^2 \big) \Big). 
	\end{equation*}
	for all $t>0$ and all $s \in [0,t]$. 
\end{lem}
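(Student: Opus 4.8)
The plan is to prove Lemma~\ref{le:Gronwall} by writing a Duhamel representation for $e^{i(t-s)\Delta} e^{i\scal{\xi,\cdot}}\Psi(s)$ as a function of $s$, estimating the stochastic and deterministic correction terms via Burkholder (Proposition~\ref{pr:Burkholder}) and Lemma~\ref{le:exchange_trig}, and then closing with a Gr\"onwall-type iteration. First I would fix $t>0$ and regard $s \mapsto \Psi(s)$ as solving \eqref{eq:main_equation}; applying $e^{i(t-s)\Delta}e^{i\scal{\xi,\cdot}}$ to the Duhamel formula for $\Psi(s)$ started from time $0$, and using that $e^{-ir\Delta}(V\Psi(r))$-type terms are what appear, I get a decomposition of the form
\begin{equation*}
e^{i(t-s)\Delta} e^{i\scal{\xi,\cdot}}\Psi(s) = e^{i(t-s)\Delta} e^{i\scal{\xi,\cdot}} e^{is\Delta} f - i\delta \int_0^s e^{i(t-r)\Delta} e^{i\scal{\xi,\cdot}} e^{i(s-r)\Delta}\big(V\Psi(r)\big)\,{\rm d}B_r - \tfrac{\delta^2}{2}\int_0^s e^{i(t-r)\Delta} e^{i\scal{\xi,\cdot}} e^{i(s-r)\Delta}\big(V^2\Psi(r)\big)\,{\rm d}r.
\end{equation*}
Wait — more carefully, one must commute the $e^{i\scal{\xi,\cdot}}$ past the propagators; I would instead expand $\Psi(s)$ first and then multiply, so that the Duhamel kernels already carry the full time gap $t-r$. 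The key structural point is that the free term is $e^{i(t-s)\Delta}e^{i\scal{\xi,\cdot}}e^{is\Delta}f$, to which Lemma~\ref{le:exchange_trig} with $m=2$, $u_1 = s$, $u_2 = t-s$ applies: since $u_1 + u_2 = t > 0$, it is bounded in $L^p_x \to L^q_x$ by $C t^{-\alpha}$, giving the $C t^{-\alpha}\|f\|_{L^p_x}$ leading term. Here I should double-check that the single extra modulation $e^{i\scal{\xi,\cdot}}$ is absorbed by \eqref{eq:exchange_trig}; it is, since that identity produces exactly one modulation on each side and composition of two modulated propagators is again of that form.

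Next I would estimate the two integral terms in $L^\rho_\omega L^q_x$. For the stochastic term, Burkholder (the triangle-inequality form in Proposition~\ref{pr:Burkholder}) bounds its $L^\rho_\omega L^q_x$ norm squared by $\delta^2 \int_0^s \| e^{i(t-r)\Delta} e^{i\scal{\xi,\cdot}} e^{i(s-r)\Delta}(V\Psi(r))\|_{L^\rho_\omega L^q_x}^2\,{\rm d}r$. I would write $V\Psi(r) = V e^{-i(t-r)\Delta}\big(e^{i(t-r)\Delta}\Psi(r)\big)$ — no, better: the natural object is $\Psi(r)$ itself, and I want to reproduce $F_t(r)$. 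Using $\|V g\|_{L^\rho_\omega L^q_x} \lesssim \|\widehat V\|_{L^1}\,$(appropriate norm of $g$) together with the commutation identity \eqref{eq:exchange_trig} to rewrite $e^{i(t-r)\Delta}e^{i\scal{\xi,\cdot}}e^{i(s-r)\Delta} = e^{i\theta}e^{i\scal{\eta,\cdot}}e^{i(t-r)\Delta}e^{i\scal{\zeta,\cdot}}$ wait the exponents need $s-r + t-r$, let me just say the two propagators compose to $e^{i(t+s-2r)\Delta}$ up to modulations. The cleaner route: I would insert $\Psi(r)$ directly and note $\|e^{i(t-r)\Delta} e^{i\scal{\xi,\cdot}} e^{i(s-r)\Delta}(V\Psi(r))\|_{L^\rho_\omega L^q_x}$, after pulling $V$ out and using \eqref{eq:exchange_trig} to collapse the two propagators plus modulation into one modulated propagator $e^{i(t+s-2r)\Delta}$ acting on a modulated $\Psi(r)$, is $\lesssim \|\widehat V\|_{L^1} \sup_{\eta} \|e^{i(t+s-2r)\Delta} e^{i\scal{\eta,\cdot}}\Psi(r)\|_{L^\rho_\omega L^q_x}$. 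Since $t+s-2r = (t-r)+(s-r) \ge t-r \ge 0$... but this is not quite $F_{t}(r)$, which uses gap $t-r$. I would resolve this by choosing the Duhamel identity so the propagator gaps land on $t-r$ exactly: expand $\Psi(s)$ via its own Duhamel from $0$ to $s$, giving kernel $e^{i(s-r)\Delta}$ inside, then apply $e^{i(t-s)\Delta}$; then $e^{i(t-s)\Delta}e^{i(s-r)\Delta} = e^{i(t-r)\Delta}$ exactly, and after commuting modulations I get precisely $\sup_\eta\|e^{i(t-r)\Delta}e^{i\scal{\eta,\cdot}}\Psi(r)\|_{L^\rho_\omega L^q_x} = F_t(r)$. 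So the stochastic term contributes $\lesssim \delta \|\widehat V\|_{L^1}\big(\int_0^s F_t(r)^2\,{\rm d}r\big)^{1/2}$, and the deterministic correction contributes $\lesssim \delta^2 \|\widehat V\|_{L^1}^2 \int_0^s F_t(r)\,{\rm d}r$ (here I use $\|V^2 g\|\lesssim \|\widehat V\|_{L^1}^2 \|g\|$ or simply bound $\|V^2\|$ by a constant depending on $V$; to match the stated $\|\widehat V\|_{L^1}$ powers I would factor $V^2$ through two modulation bounds).

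Finally I would close the estimate. Collecting the three pieces,
\begin{equation*}
F_t(s) \le C t^{-\alpha}\|f\|_{L^p_x} + C\delta\|\widehat V\|_{L^1}\Big(\int_0^s F_t(r)^2\,{\rm d}r\Big)^{1/2} + C\delta^2 \|\widehat V\|_{L^1}^2\int_0^s F_t(r)\,{\rm d}r.
\end{equation*}
Setting $G(s) = \sup_{0\le \sigma \le s} F_t(\sigma)$ and $A = Ct^{-\alpha}\|f\|_{L^p_x}$, this gives $G(s) \le A + C\delta\|\widehat V\|_{L^1} s^{1/2} G(s)^{1/2} + C\delta^2\|\widehat V\|_{L^1}^2 s\, G(s)$. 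Wait, pulling the sup out of the integrals is fine since $F_t(r)\le G(s)$ for $r\le s$. Using $ab^{1/2} \le \tfrac12(a^2 + b)$ absorbs the middle term partly into $G(s)$, but $s$ is unbounded so I cannot simply absorb; instead I would differentiate. Set $H(s) = \int_0^s F_t(r)^2\,{\rm d}r$; then $F_t(s)^2 \le 3A^2 + 3C^2\delta^2\|\widehat V\|_{L^1}^2 H(s) + 3C^2\delta^4\|\widehat V\|_{L^1}^4 s \int_0^s F_t(r)^2\,{\rm d}r$ (by Cauchy–Schwarz on the last integral), i.e. $H'(s) \le 3A^2 + 3C^2\delta^2\|\widehat V\|_{L^1}^2(1 + \delta^2\|\widehat V\|_{L^1}^2 s)H(s)$, and Gr\"onwall gives $H(s) \lesssim A^2 s\exp\big(C'(\delta^2\|\widehat V\|_{L^1}^2 s + \delta^4\|\widehat V\|_{L^1}^4 s^2)\big)$; feeding this back into the displayed bound for $F_t(s)$ yields exactly the claimed estimate (the $s$ and $s^{1/2}$ prefactors get absorbed into the exponential up to adjusting $C$). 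The main obstacle I anticipate is bookkeeping the modulations and propagator gaps so that the recursion closes on $F_t$ with the \emph{same} time $t$ and the gap $t-r \ge 0$ (so that the dispersive/commutation lemmas apply — they require nonvanishing total time, which here is automatic since $t>0$); getting the Duhamel expansion oriented correctly (expand $\Psi(s)$ from $0$, then apply $e^{i(t-s)\Delta}$) is what makes $e^{i(t-s)\Delta}e^{i(s-r)\Delta} = e^{i(t-r)\Delta}$ collapse cleanly, and is the crux of the argument.
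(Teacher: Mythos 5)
Your proposal is correct and follows essentially the same route as the paper: expand $\Psi(s)$ via Duhamel from time $0$, apply $e^{i(t-s)\Delta}e^{i\scal{\xi,\cdot}}$, bound the free term with Lemma~\ref{le:exchange_trig}, handle the stochastic and deterministic corrections by Fourier-expanding $V$ (resp.\ $V^2$) and recognizing $F_t(r)$ on the right after commuting the intermediate modulation with \eqref{eq:exchange_trig}, then square the resulting self-improving inequality and apply Gr\"onwall. The paper applies integral Gr\"onwall directly to $K_t(s)=F_t(s)^2$ while you differentiate $H(s)=\int_0^s F_t^2$, but these are the same step in different clothing.
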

\begin{proof}
	We expand $\Psi(s)$ as
	\begin{equation*}
	\Psi(s) = e^{is\Delta}f - i \delta \int_{0}^{s} e^{i(s-r)\Delta}\big( V \Psi(r) \big) {\rm d} B_r - \frac{\delta^2}{2} \int_{0}^{s} e^{i(s-r)\Delta} \big( V^2 \Psi(r) \big) {\rm d} r. 
	\end{equation*}
	Hence, we have the bound
	\begin{equation*}
	F_{t}(s) \leq \text{(I)} + \text{(II)} + \text{(III)}, 
	\end{equation*}
	where
	\begin{equation*}
	\begin{split}
	\text{(I)} &= \sup_{\xi \in \RR^d} \big\| e^{i(t-s)\Delta} e^{i \scal{\xi, \cdot}} e^{is\Delta} f \big\|_{L_{\omega}^{\rho}L_{x}^{q}},\\
	\text{(II)} &= \delta \sup_{\xi \in \RR^d} \Big\| \int_{0}^{s} e^{i(t-s)\Delta} e^{i \scal{\xi, \cdot}} e^{i(s-r)\Delta} V \Psi(r) {\rm d} B_r \Big\|_{L_{\omega}^{\rho}L_{x}^{q}},\\
	\text{(III)} &= \frac{\delta^2}{2} \sup_{\xi \in \RR^d} \Big\| \int_{0}^{s} e^{i(t-s)\Delta} e^{i \scal{\xi, \cdot}} e^{i(s-r)\Delta} V^2 \Psi(r) {\rm d} r \Big\|_{L_{\omega}^{\rho}L_{x}^{q}}. 
	\end{split}
	\end{equation*}
	We control them one by one. For the first one, a direction application of Lemma~\ref{le:exchange_trig} shows that it is controlled by
	\begin{equation*}
	\text{(I)} \leq C t^{-\alpha} \|f\|_{L_{x}^{p}}. 
	\end{equation*}
	For the third one, another application of \eqref{eq:exchange_trig} shows that the integrand satisfies
	\begin{equation*}
	\begin{split}
	\big\| e^{i(t-s)\Delta} e^{i \scal{\xi,\cdot}} e^{i(s-r)\Delta} V^2 \Psi(r) \big\|_{L_{\omega}^{\rho}L_{x}^{q}} &= \Big\| \int_{\RR^d} \widehat{V^2}(\eta) e^{i(t-s)\Delta} e^{i \scal{\xi,\cdot}} e^{i(s-r)\Delta} e^{i \scal{\eta, \cdot}} \Psi(r) {\rm d} \eta \Big\|_{L_{\omega}^{\rho}L_{x}^{q}}\\
	&\leq \|\widehat{V}\|_{L^1}^{2} \sup_{\eta \in \RR^d} \big\| e^{i(t-r)\Delta} e^{i\scal{\eta,\cdot}}\Psi(r) \big\|_{L_{\omega}^{\rho}L_{x}^{q}}\\
	&= \|\widehat{V}\|_{L^1}^{2} F_{t}(r), 
	\end{split}
	\end{equation*}
	which is true for all $\xi \in \RR^d$. Hence, we have
	\begin{equation*}
	\text{(III)} \leq \frac{\delta^2}{2} \|\widehat{V}\|_{L^1}^{2} \int_{0}^{s} F_{t}(r) {\rm d}r. 
	\end{equation*}
	As for (II), by Burkholder and then triangle inequalities, we have
	\begin{equation*}
	\begin{split}
	\text{(II)} &\leq C \delta \sup_{\xi \in \RR^d} \Big( \int_{0}^{s} \big\| e^{i(t-s)\Delta} e^{i \scal{\xi, \cdot}} e^{i(s-r)\Delta} V \Psi(r) \big\|_{L_{\omega}^{\rho} L_{x}^{q}}^{2} {\rm d} r \Big)^{\frac{1}{2}}\\
	&\leq C \delta \|\widehat{V}\|_{L^1} \Big(\int_{0}^{s} \big(F_{t}(r)\big)^{2} {\rm d} r \Big)^{\frac{1}{2}}, 
	\end{split}
	\end{equation*}
	where we used the previous bound for (III) with $\widehat{V^2}$ replaced by $\widehat{V}$. Combining the above three bounds, we have
	\begin{equation*}
	F_{t}(s) \leq C \Big[ t^{-\alpha} \|f\|_{L_{\omega}^{\rho}L_{x}^{p}} + \delta \|\widehat{V}\|_{L^1} \Big( \int_{0}^{s} \big( F_{t}(r) \big)^{2} {\rm d} r \Big)^{\frac{1}{2}} + \delta^2 \|\widehat{V}\|_{L^1}^{2} \int_{0}^{s} F_{t}(r) {\rm d} r \Big]. 
	\end{equation*}
	Let $K_{t}(s) = \big( F_{t}(s) \big)^{2}$, the above bound then implies
	\begin{equation*}
	K_{t}(s) \leq C \Big( t^{-2\alpha} \|f\|_{L_{\omega}^{\rho}L_{x}^{p}}^{2} + \delta^2 \|\widehat{V}\|_{L^1}^{2} \int_{0}^{s} K_{t}(r) {\rm d} r + s \delta^4 \|\widehat{V}\|_{L^1}^{4} \int_{0}^{s} K_{t}(r) {\rm d} r \Big), 
	\end{equation*}
	where the additional factor $s$ in front of the last integral comes from H\"older. The desired bound then follows from Gr\"onwall and then taking square root of $K_t$. 
\end{proof}

Combining \eqref{eq:exchange_trig} and Lemma~\ref{le:Gronwall}, we have the following consequence. 

\begin{cor} \label{cor:small_time}
	For $m \geq 1$ and $u_{1}, \dots, u_{m} \in \RR^{+}$, we have
	\begin{equation*}
	\begin{split}
	&\phantom{111}\Big\| e^{i u_{m} \Delta} V_{m-1} \cdots V_{2} e^{i u_{2}\Delta} V_{1} \Psi(u_1) \Big\|_{L_{\omega}^{\rho}L_{x}^{q}}\\
	&\leq C \Big( \sum_{j=1}^{m} u_{j} \Big)^{-\alpha} \|f\|_{L_{x}^{p}} \cdot \Big( \prod_{j=1}^{m-1} \|\widehat{V_j}\|_{L^1} \Big) \exp\Big(C \big(\delta^2 \|\widehat{V}\|_{L^1}^{2} u_1 + \delta^4 \|\widehat{V}\|_{L^1}^{4} u_1^2 \big) \Big), 
	\end{split}
	\end{equation*}
	where $C = C(\rho,p,d)$ is independent of $m$ and the $u_j$'s. 
\end{cor}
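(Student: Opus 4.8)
The plan is to combine the commutation identity~\eqref{eq:exchange_trig} with the Gr\"onwall-type bound of Lemma~\ref{le:Gronwall}. First I would apply Lemma~\ref{le:exchange_trig}: since all $u_j>0$ we have $\sum_j u_j \neq 0$, so each step of bringing the operator $e^{iu_m\Delta} V_{m-1} e^{iu_{m-1}\Delta}\cdots V_1$ into a normal form produces modulation factors $e^{i\theta}$, $e^{i\scal{\eta,\cdot}}$ together with a single free propagator $e^{i(\sum_{j} u_j)\Delta}$ acting on $\Psi(u_1)$, up to the cost of the $\|\widehat{V_j}\|_{L^1}$ factors coming from writing each $V_j$ as a superposition $V_j(x) = \int \widehat{V_j}(\eta_j) e^{i\scal{\eta_j,\cdot}}{\rm d}\eta_j$ of pure modulations. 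Concretely, expand $V_{m-1},\dots,V_1$ via their Fourier transforms, pull the resulting product of modulations and free propagators into the canonical form using~\eqref{eq:exchange_trig}, and take the $L^1_\eta$ norms out; this reduces the quantity to be estimated to
\begin{equation*}
\Big( \prod_{j=1}^{m-1} \|\widehat{V_j}\|_{L^1} \Big) \sup_{\xi \in \RR^d} \big\| e^{i(\sum_j u_j - u_1)\Delta} e^{i\scal{\xi,\cdot}} \Psi(u_1) \big\|_{L_\omega^\rho L_x^q},
\end{equation*}
where the modulation $e^{i\scal{\xi,\cdot}}$ absorbs all the accumulated $e^{i\scal{\eta,\cdot}}$ factors and the phase $e^{i\theta}$ is irrelevant for the norm.

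The second step is to recognise the supremum above as exactly $F_t(s)$ from Lemma~\ref{le:Gronwall} with the identifications $s = u_1$ and $t = \sum_{j=1}^m u_j$; note $t - s = \sum_{j=2}^m u_j$, which is the free-propagation time appearing after the commutation, and $t \geq s$ holds since all $u_j \geq 0$. Invoking Lemma~\ref{le:Gronwall} then gives
\begin{equation*}
F_t(u_1) \leq C \Big( \sum_{j=1}^m u_j \Big)^{-\alpha} \|f\|_{L_x^p} \exp\Big( C\big( \delta^2 \|\widehat{V}\|_{L^1}^2 u_1 + \delta^4 \|\widehat{V}\|_{L^1}^4 u_1^2 \big) \Big),
\end{equation*}
and multiplying by $\prod_{j=1}^{m-1}\|\widehat{V_j}\|_{L^1}$ yields precisely the claimed bound. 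The constant $C$ depends only on $\rho, q$ (equivalently $\rho, p, d$), and is independent of $m$ and of the $u_j$ because both~\eqref{eq:exchange_trig} and Lemma~\ref{le:Gronwall} have constants with that dependence.

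One technical point to handle carefully is the boundary case $u_1 = 0$ (and more generally some $u_j=0$): when $u_1=0$ the object is $e^{iu_m\Delta} V_{m-1}\cdots V_1 f$ with $f = \Psi(0)$, and Lemma~\ref{le:Gronwall} still applies at $s=0$ giving $F_t(0) \le C t^{-\alpha}\|f\|_{L_x^p}$ with the exponential factor equal to $1$; the commutation step~\eqref{eq:exchange_trig} only needs $\sum_j u_j \neq 0$, which is forced if any single $u_j > 0$, and otherwise all $u_j = 0$ makes the statement trivially about the identity composed with the $V_j$'s, a degenerate case one can either exclude or treat directly (though in the applications $\sum_j u_j$ will be bounded below). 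The main obstacle — really the only substantive one — is the bookkeeping in step one: verifying that iterating~\eqref{eq:exchange_trig} through $m-1$ modulation/propagator pairs genuinely collapses to the single canonical form with the free-propagation time being exactly $\sum_{j=1}^m u_j$ and that the modulation parameters can be absorbed into one supremum over $\xi$, so that no extra $m$-dependent constants sneak in. This is routine given Lemma~\ref{le:exchange_trig} but should be stated cleanly, e.g. by an induction on $m$ using the algebraic identity~\eqref{eq:exchange_trig} at each step.
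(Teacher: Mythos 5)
Your proof is correct and is precisely what the paper intends: Fourier-expand each $V_j$, use \eqref{eq:exchange_trig} to collapse the chain of propagators and modulations into a single $e^{i(\sum_{j\geq 2}u_j)\Delta}$ flanked by modulations, recognise the resulting quantity (after taking out the $\|\widehat{V_j}\|_{L^1}$ factors) as $F_t(u_1)$ with $t=\sum_j u_j$, and invoke Lemma~\ref{le:Gronwall}. The paper states the corollary as an immediate consequence of \eqref{eq:exchange_trig} and Lemma~\ref{le:Gronwall} with no further proof, so your argument supplies exactly the omitted details, including the correct identification $t-s=\sum_{j\geq 2}u_j$ and the observation that the left modulation and the scalar phase have modulus one and drop out of the norm.
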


The following lemma is analogous to Lemma~\ref{le:op_strong_decay} but with $\Psi$ involved and requiring all $u_j$'s being positive. 

\begin{lem} \label{le:variable_small}
	Let $m \geq 2$. We have
	\begin{equation*}
	\begin{split}
	&\phantom{111}\bigg\| e^{iu_m \Delta} V_{m-1} \cdots V_{2} e^{i u_2 \Delta} V_1 \Psi(u_1) \bigg\|_{L_{\omega}^{\rho}L_{x}^{q}}\\
	&\lesssim_{m} \Big( \prod_{j=1}^{m} \scal{u_j}^{-\alpha} \Big) \Big( \prod_{j=1}^{m-1} \big( \|\widehat{V_j}\|_{L_x^1} + \|V_j\|_{L_{x}^{\frac{pq}{q-p}}} \big) \Big) \|f\|_{L_x^p}, 
	\end{split} 
	\end{equation*}
	uniformly over the points $u_1, \dots, u_m \in \RR^{+}$ satisfying $\sum_{j} u_j > 2$ and $u_1<1$. 
\end{lem}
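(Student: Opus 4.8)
The plan is to combine the two previous results in this section according to the size of $u_1$ relative to the total time $\sum_j u_j$, exactly in the spirit of how Lemma~\ref{le:op_strong_decay} refines Lemma~\ref{le:exchange_trig}. The hypotheses $\sum_j u_j > 2$ and $u_1 < 1$ force $\sum_{j=2}^m u_j > 1$, which is the separation we will exploit. First I would split the operator at the innermost factor: write
\begin{equation*}
e^{iu_m\Delta} V_{m-1} \cdots e^{iu_2\Delta} V_1 \Psi(u_1) = \Big( e^{iu_m\Delta} V_{m-1} \cdots V_2 e^{iu_2\Delta} \Big) \big( V_1 \Psi(u_1) \big),
\end{equation*}
and treat the bracketed operator (acting on the function $V_1\Psi(u_1)$) by the deterministic bound. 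Since $\sum_{j=2}^m u_j > 1 > 0$, Lemma~\ref{le:op_strong_decay} applied with the $m-1$ times $u_2,\dots,u_m$ gives
\begin{equation*}
\Big\| e^{iu_m\Delta} V_{m-1}\cdots V_2 e^{iu_2\Delta} \Big\|_{L_x^p\to L_x^q} \lesssim_m \Big(\prod_{j=2}^m \scal{u_j}^{-\alpha}\Big) \prod_{j=2}^{m-1}\big(\|\widehat{V_j}\|_{L^1} + \|V_j\|_{L_x^{pq/(q-p)}}\big).
\end{equation*}
It then remains to bound $\|V_1\Psi(u_1)\|_{L_x^p}$ in $L_\omega^\rho$.

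For the remaining factor, the point is that $u_1<1$ so $\scal{u_1}^{-\alpha}$ is comparable to a constant, and we only need $\|V_1\Psi(u_1)\|_{L_\omega^\rho L_x^p} \lesssim (\|\widehat{V_1}\|_{L^1}+\|V_1\|_{L_x^{pq/(q-p)}}) \|f\|_{L_x^p}$ up to the exponential factor $\exp(C(\delta^2\|\widehat V\|_{L^1}^2 u_1 + \delta^4\|\widehat V\|_{L^1}^4 u_1^2))$, which is $O_m(1)$ because $u_1<1$ and $\delta$ is bounded. By H\"older with exponents $\tfrac{pq}{q-p}$ and $q$ (note $\tfrac{q-p}{pq}+\tfrac1q = \tfrac1p$), $\|V_1\Psi(u_1)\|_{L_x^p} \le \|V_1\|_{L_x^{pq/(q-p)}} \|\Psi(u_1)\|_{L_x^q}$; taking $L_\omega^\rho$ norms and invoking Lemma~\ref{le:Gronwall} with $s=t=u_1$ (or Corollary~\ref{cor:small_time} with $m=1$) bounds $\|\Psi(u_1)\|_{L_\omega^\rho L_x^q}$ by $C\, u_1^{-\alpha}\|f\|_{L_x^p}\exp(\cdots)$. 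Since $u_1^{-\alpha}$ is \emph{not} bounded as $u_1\to0$, I would instead route this through Corollary~\ref{cor:small_time} applied to the full product with all the $V$'s, i.e.\ use the corollary directly on the whole expression to get the factor $(\sum_j u_j)^{-\alpha}$; then separately, when $u_1$ is comparably large (say $u_1 > c\sum_j u_j$ for a fixed small $c$), the naive estimate $u_1 \sim \sum_j u_j$ already yields $(\sum_j u_j)^{-\alpha} \sim \prod_j \scal{u_j}^{-\alpha}$ up to $m$-dependent constants using $\sum u_j > 2$.

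More cleanly: by Corollary~\ref{cor:small_time}, the left side is $\lesssim (\sum_j u_j)^{-\alpha}\|f\|_{L_x^p}(\prod_{j=1}^{m-1}\|\widehat{V_j}\|_{L^1})\exp(\cdots)$; and since $u_1<1$, the exponential is $\lesssim_m 1$. Meanwhile $\sum_j u_j > 2$ and $u_1 < 1$ give $\sum_{j\ge2} u_j > 1$, so each $u_j \le \sum_j u_j$ forces $\prod_{j=1}^m \scal{u_j}^{-\alpha} \gtrsim_m (\sum_j u_j)^{-m\alpha}$... which is the wrong direction. So the genuinely efficient route is the split above: use the deterministic strong-decay Lemma~\ref{le:op_strong_decay} on the $u_2,\dots,u_m$ block to harvest $\prod_{j\ge2}\scal{u_j}^{-\alpha}$, and absorb the missing $\scal{u_1}^{-\alpha}\sim 1$ into the constant since $u_1<1$, with $\|V_1\Psi(u_1)\|_{L_\omega^\rho L_x^p}$ controlled by H\"older plus the $L_x^q$ bound on $\Psi(u_1)$ — and here I would use that $\Psi$ has pathwise mass conservation together with a short-time bound, or simply absorb $u_1^{-\alpha}$ by the (available but wasteful) bound $\|\Psi(u_1)\|_{L_\omega^\rho L_x^q} \lesssim u_1^{-\alpha}\|f\|_{L_x^p}$ combined with $\scal{u_1}^{-\alpha} u_1^{-\alpha} \le \scal{u_1}^{-\alpha}$ after noting we want $\scal{u_1}^{-\alpha}$ and $u_1 < 1$ makes both $O(1)$...

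\textbf{The main obstacle} is precisely this bookkeeping near $u_1 = 0$: the naive pointwise dispersive bound for $\Psi(u_1)$ blows up like $u_1^{-\alpha}$, whereas the target has the harmless factor $\scal{u_1}^{-\alpha}$. I expect the resolution to be that one should \emph{not} dispersive-estimate $\Psi(u_1)$; instead, treating $V_1\Psi(u_1)$ as the "initial data" in $L_x^p$ and invoking the deterministic Lemma~\ref{le:op_strong_decay} for the $(m-1)$-fold operator in $u_2,\dots,u_m$ — whose total exceeds $1$ — already produces $\prod_{j=2}^m\scal{u_j}^{-\alpha}$, and $\|V_1\Psi(u_1)\|_{L_\omega^\rho L_x^p}$ is bounded via H\"older and Lemma~\ref{le:Gronwall} (whose right side, with $s=t=u_1$, reads $C\,u_1^{-\alpha}\|f\|$, still singular). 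The clean fix is to observe $\|\Psi(u_1)\|_{L_\omega^\rho L_x^q}$ can be replaced, for the purpose of this lemma, by first running one step of Duhamel so that only $e^{iu_1\Delta}f$ carries the $u_1^{-\alpha}$ while the correction terms carry integrable singularities — but since $u_1 < 1$ the total time still satisfies $\sum_j u_j > 1$ on the $u_2,\dots,u_m$ side, so one keeps $u_1$ attached to a later propagator instead. Concretely: rewrite $e^{iu_2\Delta} V_1 \Psi(u_1)$ with $\Psi(u_1)$ Duhamel-expanded back to time $0$, producing $e^{i(u_1+u_2)\Delta}(V_1 e^{iu_1\Delta}f)$ plus stochastic/linear corrections, and now apply Lemma~\ref{le:op_strong_decay} to the full operator string $e^{iu_m\Delta}V_{m-1}\cdots e^{i(u_1+u_2)\Delta}V_1 e^{iu_1\Delta}(\cdot)$ acting on $f$, using that the total time $u_1 + \sum_{j\ge2}u_j = \sum_j u_j > 2$ exceeds $\eps = 1$; the factor $\scal{u_1}^{-\alpha}$ then appears naturally from the extra $e^{iu_1\Delta}$, the $\scal{u_1+u_2}^{-\alpha}$ can be compared to $\scal{u_1}^{-\alpha}\scal{u_2}^{-\alpha}$ up to constants since $u_1<1$, and the Burkholder term for the correction is handled as in Lemma~\ref{le:Gronwall}. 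I would carry out this Duhamel-reattachment first, then apply Lemma~\ref{le:op_strong_decay} with $\eps=1$, then handle the error terms by Burkholder plus the already-established estimates, collecting the $m$-dependent constants at the end.
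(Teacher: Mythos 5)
Your instinct is right that the key difficulty is the behaviour near $u_1 = 0$, and your first split (deterministic Lemma~\ref{le:op_strong_decay} on $u_2,\dots,u_m$, then bound $\|V_1\Psi(u_1)\|_{L_x^p}$) indeed fails because $\|\Psi(u_1)\|_{L_\omega^\rho L_x^q} \lesssim u_1^{-\alpha}\|f\|_{L_x^p}$ blows up. But the Duhamel-reattachment you propose as the fix does not close the argument, and there is a second, more structural gap that you notice in passing but never resolve.

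On the reattachment: after one Duhamel step the leading contribution $e^{iu_m\Delta}V_{m-1}\cdots V_1 e^{iu_1\Delta}f$ is indeed handled by Lemma~\ref{le:op_strong_decay} with $\eps=2$. But the stochastic and linear correction terms contain $\Psi(r)$ for $r < u_1 < 1$, so they have exactly the same structure as the quantity you started with, with $u_1$ replaced by $r$ and $u_2$ by $u_1-r+u_2$. Bounding them therefore requires the very lemma you are trying to prove (with $m+1$ factors), or else Lemma~\ref{le:Gronwall}/Corollary~\ref{cor:small_time}, whose output is $(\sum_j u_j)^{-\alpha}\|f\|_{L_x^p}$, \emph{not} the product $\prod_j \scal{u_j}^{-\alpha}\|f\|_{L_x^p}$. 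Waving at ``handled as in Lemma~\ref{le:Gronwall}'' therefore begs the question. (There is also a small slip: $u_1^{-\alpha}$ is not $O(1)$ for $u_1<1$; it diverges as $u_1\to 0^+$, so ``$\scal{u_1}^{-\alpha}u_1^{-\alpha}\leq\scal{u_1}^{-\alpha}$'' is false. And mass conservation only bounds $\|\Psi(u_1)\|_{L_x^2}$ by $\|f\|_{L_x^2}$, which is not controlled by $\|f\|_{L_x^p}$ for $p<2$, so that route is also blocked.)

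The deeper gap is the one you flag yourself and then set aside: the comparison $(\sum_j u_j)^{-\alpha}\lesssim\prod_j\scal{u_j}^{-\alpha}$ goes the wrong way in general. If $u_2,\dots,u_m$ are all of comparable size $T/(m-1)$ with $T$ large, then $(\sum u_j)^{-\alpha}\sim T^{-\alpha}$ while $\prod\scal{u_j}^{-\alpha}\sim T^{-(m-1)\alpha}$, so Corollary~\ref{cor:small_time} simply cannot deliver the product-type decay. The paper resolves this with a block decomposition: it partitions the indices by comparing each $u_j$ to a geometric threshold $2^{2m-j}$, so that within the leading block (which contains $u_1$) the sum is dominated by a single large $u_j$ and the remaining $u_j$'s are summably small --- this is exactly the regime in which $(\sum u_j)^{-\alpha}\lesssim_m\prod\scal{u_j}^{-\alpha}$ \emph{does} hold, allowing Corollary~\ref{cor:small_time} to be used there; on the remaining blocks, where the times are spread out and bounded away from zero, it applies the purely deterministic Lemma~\ref{le:op_strong_decay}. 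Your proposal has no analogue of this case-splitting, and without it the argument cannot turn the $(\sum u_j)^{-\alpha}$ coming out of the stochastic estimate into the required $\prod_j\scal{u_j}^{-\alpha}$.
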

\begin{proof}
	Note that the claim does not follow directly from Corollary~\ref{cor:small_time} since we do not want to have a singularity in $u_1$ when it is small. This lemma is similar to the second bound in \cite[Lemma~2.6]{decay_deterministic} but not identical, so we give detailed arguments here. 
	
	We distinguish two cases: $u_2>2^{2m}$ and $u_2 \leq 2^{2m}$, starting with the first one. In this case, let $k_{1} = 2$, and define recursively
	\begin{equation*}
	k_{\ell+1} := m \wedge \inf \big\{ j>k_{\ell}: u_{j} > 2^{2m-j} \big\}
	\end{equation*}
	until it reaches $m$. This gives an increasing (finite) sequence of integers $2 = k_1 < \dots < k_{N+1} = m$, and partitions $\{u_j\}_{j=2}^{m}$ into $N$ blocks of the form $(u_{k_\ell}, \dots, u_{k_{\ell+1}-1})$ for $\ell = 1, \dots, N$. In each block, we have
	\begin{equation} \label{eq:time_block}
	\bigg( \sum_{j=k_{\ell}}^{k_{\ell+1}-1} u_{j} \bigg)^{-\alpha} \lesssim_{m} \prod_{j=k_{\ell}}^{k_{\ell+1}-1} \scal{u_{j}}^{-\alpha} \qquad \text{and} \qquad \bigg( \sum_{j=1}^{k_2-1} u_{j} \bigg)^{-\alpha} \lesssim_{n} \prod_{j=1}^{k_2-1} \scal{u_{j}}^{-\alpha}. 
	\end{equation}
	Hence, for $\ell \geq 2$, we have the bound
	\begin{equation} \label{eq:op_strong_decay_block}
	\begin{split}
	&\phantom{111}\Big\| e^{i u_{k_{\ell+1}-1}\Delta} \Big( \prod_{j=k_{\ell}-2}^{k_{\ell+1}-2} V_{j} e^{iu_{j}\Delta} \Big) V_{k_{\ell}-1} \Big\|_{L_{x}^{q} \rightarrow L_{x}^{q}}\\
	&\lesssim_{m} \Big( \prod_{j=k_{\ell}-2}^{k_{\ell+1}-1} \scal{u_j}^{-\alpha} \Big) \prod_{j=k_{\ell}-1}^{k_{\ell+1}-2} \Big( \|\widehat{V_j}\|_{L_x^1} + \|V_{j}\|_{L_{x}^{\frac{pq}{q-p}}} \Big). 
	\end{split}
	\end{equation}
	As for $\ell=1$, Fourier expanding $V_1, \dots, V_{k_2-2}$ and applying \eqref{eq:exchange_trig}, Corollary~\ref{cor:small_time} and the second bound in \eqref{eq:time_block}, we have
	\begin{equation} \label{eq:decay_first_block}
	\begin{split}
	&\phantom{111}\Big\| e^{i u_{k_2-1} \Delta} V_{k_2-2} \cdots V_{2} e^{i u_2 \Delta} V_1 \Psi(u_1) \Big\|_{L_{\omega}^{\rho}L_{x}^{q}}\\
	&\lesssim_{m} \Big( \prod_{j=1}^{k_2-1} \scal{u_j}^{-\alpha} \Big) \Big( \prod_{j=1}^{k_2 - 1} \|\widehat{V_j}\|_{L_x^1} \Big) \|f\|_{L_x^p}. 
	\end{split}
	\end{equation}
	Muliplying \eqref{eq:decay_first_block} and \eqref{eq:op_strong_decay_block} gives the desired bound in the case $u_2 > 2^{2m}$. 
	
	We now turn to the case $u_2 \leq 2^{2m}$. If $u_j \leq 2^{2m+j}$ for all $j$, then we have
	\begin{equation} \label{eq:time_block_whole}
	\bigg( \sum_{j=1}^{m} u_{j} \bigg)^{-\alpha} \lesssim_{m} \prod_{j=1}^{m} \scal{u_j}^{-\alpha}, 
	\end{equation}
	and the bound follows. If not, then let
	\begin{equation*}
	k = \inf \big\{ j \geq 3:  u_j > 2^{2m+j} \big\}. 
	\end{equation*}
	If $k=m$, or $k<m$ and $|u_j| \leq 1$ for all $k < j \leq m$, then we still have \eqref{eq:time_block_whole}, and the bound follows in the same way. Otherwise, let
	\begin{equation*}
	k' = \inf \big\{ j>k: u_j > 1 \big\}. 
	\end{equation*}
	Then $k' < m$. Since $|u_{k'}| > 1$, by Lemma~\ref{le:op_strong_decay}, we have
	\begin{equation*}
	\begin{split}
	&\phantom{111}\Big\| e^{i u_m \Delta} \Big( \prod_{j=k'}^{m-1} V_{j} e^{i u_{j} \Delta} \Big) V_{k'-1} \Big\|_{L_{x}^{q} \rightarrow L_{x}^{q}}\\
	&\lesssim_{m} \bigg( \prod_{j=k'}^{m} \scal{u_j}^{-\alpha} \bigg)  \prod_{j=k'-1}^{m-1} \Big( \|\widehat{V_j}\|_{L^1} + \|V_j\|_{L^{\frac{pq}{q-p}}} \Big). 
	\end{split}
	\end{equation*}
	For the term $\| e^{i u_{k'-1} \Delta} V_{u_{k'-2}} \cdots V_{2} e^{i u_2 \Delta} V_{1} \Psi(u_1) \|_{L_{\omega}^{\rho}L_{x}^{q}}$, one can control it in the same way as \eqref{eq:decay_first_block} since we have
	\begin{equation*}
	\bigg( \sum_{j=1}^{k'-1} u_j \bigg)^{-\alpha} \lesssim_{m} \prod_{j=1}^{k'-1} \scal{u_j}^{-\alpha}. 
	\end{equation*}
	This completes the proof. 
\end{proof}

\section{Proof of Theorem~\ref{th:main_decay}}
\label{sec:proof_main}

We are now ready to prove Theorem~\ref{th:main_decay}. We consider $d=3$ in this section, and will briefly explain in the next section how the situation for general $d$ can be established with minor modification of the argument. Recall the notation
\begin{equation*}
\alpha = d \Big( \frac{1}{2} - \frac{1}{q} \Big)\;, \qquad d=3. 
\end{equation*}
As mentioned in Section~\ref{sec:outline}, it suffices to consider (arbitrary) $q$ in the range \eqref{eq:range_q_intro}. Also since $q < +\infty$, we have
\begin{equation} \label{eq:range_alpha_3d}
1 < \alpha < \frac{3}{2}. 
\end{equation}
Theorem~\ref{th:main_decay} is equivalent to the bound
\begin{equation} \label{eq:main_aim}
\sup_{t \in \RR^{+}} \Big( t^{\alpha} \|\Psi(t)\|_{L_{\omega}^{\rho}L_{x}^{q}} \Big) \lesssim_{\rho,q} \|f\|_{L_x^p}. 
\end{equation}
To achieve this, we need to establish an inequality of the form
\begin{equation} \label{eq:bootstrap}
t^{\alpha} \|\Psi(t)\|_{L_{\omega}^{\rho}L_{x}^{q}} \leq C_{1}(\rho,q) \|f\|_{L_x^p} + C_{2}(\rho,q,\delta) \sup_{r \in [0,t]} \Big( r^{\alpha} \|\Psi(r)\|_{L_{\omega}^{\rho}L_{x}^{q}} \Big)
\end{equation}
for all $t$, where both $C_{1}$ and $C_{2}$ are independent of $t$, and $C_{2}(\rho,q,\delta) \rightarrow 0$ as $\delta \rightarrow 0$ for every fixed $\rho$ and $q$. This will allow us to absorb the second term on the right hand side into the left and establish \eqref{eq:main_aim}. 

By Corollary~\ref{cor:small_time}, the bound \eqref{eq:bootstrap} is true with $C_{2}=0$ if $t \leq 2$. Hence, we only need to prove \eqref{eq:bootstrap} for $t>2$. Following the strategy in \cite{decay_deterministic}, we expand $\Psi$ with Duhamel formula twice to obtain
\begin{equation} \label{eq:duhamel_2}
\begin{split}
\Psi(t) = &e^{it\Delta}f - i \delta \int_{0}^{t} e^{i(t-s)\Delta} V e^{is\Delta} f{\rm d} B_s - \frac{\delta^2}{2} \int_{0}^{t} e^{i(t-s)\Delta} V^{2} e^{is\Delta} f {\rm d}s\\
&+ \text{(I)} + \text{(II)} + \text{(III)} + \text{(IV)}, 
\end{split}
\end{equation}
where
\begin{equation} \label{eq:duhamel_list}
\begin{split}
\text{(I)} &= \frac{\delta^4}{4} \int_{0}^{t} \int_{0}^{s} e^{i(t-s)\Delta} V^{2} e^{i(s-r)\Delta} V^{2} \Psi(r) {\rm d}r {\rm d}s,\\
\text{(II)} &= \frac{i \delta^3}{2} \int_{0}^{t} \int_{0}^{s} e^{i(t-s)\Delta} V^{2} e^{i(s-r)\Delta} V \Psi(r) {\rm d}B_r {\rm d}s,\\
\text{(III)} &= \frac{i \delta^3}{2} \int_{0}^{t} \int_{0}^{s} e^{i(t-s)\Delta} V e^{i(s-r)\Delta} V^2 \Psi(r) {\rm d}r {\rm d}B_s,\\
\text{(IV)} &= - \delta^2 \int_{0}^{t} \int_{0}^{s} e^{i(t-s)\Delta} V e^{i(s-r)\Delta} V \Psi(r) {\rm d}B_r {\rm d}B_s. 
\end{split}
\end{equation}
We need to show that each term in the right hand side of \eqref{eq:duhamel_2} is bounded by the right hand side of \eqref{eq:bootstrap}. Since $\rho$ and $q$ are fixed, for simplicity of notation, in what follows we write ``$\lesssim$" instead of ``$\lesssim_{\rho,q}$". Also, we restrict to $t>2$ from now on. 

\subsection{``Constant" terms}

We first treat the terms without $\Psi(r)$, that is, those on the first line of the right hand side of \eqref{eq:duhamel_2}. The bound for $e^{it\Delta} f$ is the standard dispersive estimate for $e^{it\Delta}$. 

As for the second term, since $\rho \geq 2$, by Burkholder and triangle inequalities (Proposition~\ref{pr:Burkholder}), we have
\begin{equation*}
\bigg\| \int_{0}^{t} e^{i(t-s)\Delta} V e^{is\Delta} f {\rm d} B_s \bigg\|_{L_{\omega}^{\rho}L_{x}^{q}} \lesssim \bigg( \int_{0}^{t} \|e^{i(t-s)\Delta} V e^{is\Delta} f\|_{L_{\omega}^{\rho}L_{x}^{q}}^{2} {\rm d}s \bigg)^{\frac{1}{2}}. 
\end{equation*}
Since $t>2$, by Lemme~\ref{le:op_strong_decay}, the integrand satisfies the bound
\begin{equation*}
\|e^{i(t-s)\Delta} V e^{is\Delta} f\|_{L_x^q} \lesssim \scal{t-s}^{-\alpha} \scal{s}^{-\alpha} \|f\|_{L_x^p}. 
\end{equation*}
Since $2\alpha > 1$, we get
\begin{equation*}
\bigg\| \int_{0}^{t} e^{i(t-s)\Delta} V e^{is\Delta} f {\rm d} B_s \bigg\|_{L_{\omega}^{\rho}L_{x}^{q}} \lesssim t^{-\alpha} \|f\|_{L_x^p}. 
\end{equation*}
The third term can be controlled in the same way (but requiring $\alpha>1$). This completes the three terms in the first line of the right hand side of \eqref{eq:duhamel_2}. 

The rest of the section is devoted to the four terms in \eqref{eq:duhamel_list}. We start with the first one. 

\subsection{Term (I)}

Let
\begin{equation*}
G_{t}(r,s) = \big\| e^{i(t-s)\Delta} V^2 e^{i(s-r)\Delta} V^2 \Psi(r) \big\|_{L_{\omega}^{\rho} L_{x}^{q}}, 
\end{equation*}
so we have
\begin{equation} \label{eq:list_1}
t^{\alpha} \|\text{(I)}\|_{L_{\omega}^{\rho} L_{x}^{q}} \leq \delta^4 t^{\alpha} \int_{0}^{t} \int_{0}^{s} G_{t}(r,s) {\rm d}r {\rm d}s. 
\end{equation}
Recall that $t>2$. In the domains $r \in [0,1]$, $r \in [1,t-1]$ and $r \in [t-1,t]$, by Lemmas~\ref{le:variable_small}, ~\ref{le:op_strong_decay} and~\ref{le:exchange_trig} respectively, we can bound the integrand $G_{t}$ pointwise by
\begin{equation} \label{eq:integrand_bound}
G_{t}(r,s) \lesssim
\begin{cases}
\scal{t-s}^{-\alpha} \scal{s-r}^{-\alpha} \|f\|_{L_x^p}\;, &r\in[0,1]\\
\scal{t-s}^{-\alpha} \scal{s-r}^{-\alpha} \|\Psi(r)\|_{L_{\omega}^{\rho}L_{x}^{q}}\;, &r \in [1,t-1]\\
(t-r)^{-\alpha} \|\Psi(r)\|_{L_{\omega}^{\rho}L_{x}^{q}}\;, &r \in [t-1,t].
\end{cases}
\end{equation}
All the proportionality constants above are independent of $r$, $s$ and $t$. Note that the factor in the last bound is $(t-r)^{-\alpha}$ rather than $\scal{t-r}^{-\alpha}$, so it has a singularity at $r \approx t$. 

According to \eqref{eq:integrand_bound}, we decompose the integral on the right hand side of \eqref{eq:list_1} into three disjoint regions as
\begin{equation*}
\int_{0}^{t} \int_{0}^{s} G_t {\rm d}r {\rm d}s = \int_{0}^{t} \int_{0}^{s \wedge 1} G_t {\rm d}r {\rm d}s + \int_{1}^{t} \int_{1}^{s \wedge (t-1)} G_t {\rm d}r {\rm d}s + \int_{t-1}^{t} \int_{t-1}^{s} G_t {\rm d}r {\rm d}s. 
\end{equation*}
For the first one, using the first bound in \eqref{eq:integrand_bound} and $\alpha>1$, we have
\begin{equation} \label{eq:1_1}
\int_{0}^{t} \int_{0}^{s \wedge 1} G_t {\rm d}r {\rm d}s \lesssim \|f\|_{L_x^p} \int_{0}^{t} \int_{0}^{s \wedge 1} \scal{t-s}^{-\alpha} \scal{s-r}^{-\alpha} {\rm d}r {\rm d}s \lesssim t^{-\alpha} \|f\|_{L_x^p}, 
\end{equation}
For the second one, using the second bound in \eqref{eq:integrand_bound}, we get
\begin{equation} \label{eq:1_2}
\begin{split}
\int_{1}^{t} \int_{1}^{s \wedge (t-1)} G_t {\rm d}r {\rm d}s &\lesssim \int_{1}^{t} \int_{1}^{s \wedge (t-1)} \scal{t-s}^{-\alpha} \scal{s-r}^{-\alpha} \|\Psi(r)\|_{L_{\omega}^{\rho}L_{x}^{q}} {\rm d}r {\rm d}s\\
&\leq \sup_{r \in [1,t-1]} \Big( \scal{r}^{\alpha} \|\Psi(r)\|_{L_{\omega}^{\rho} L_{x}^{q}} \Big) \int_{1}^{t} \int_{1}^{s} \scal{t-s}^{-\alpha} \scal{s-r}^{-\alpha} \scal{r}^{-\alpha} {\rm d}r {\rm d}s\\
&\lesssim t^{-\alpha} \sup_{r \in [0,t]} \Big( r^{\alpha} \|\Psi(r)\|_{L_{\omega}^{\rho} L_{x}^{q}} \Big), 
\end{split}
\end{equation}
where in the last bound we have also used $\alpha>1$. For the third term, we have
\begin{equation} \label{eq:1_3}
\begin{split}
t^{\alpha} \int_{t-1}^{t} \int_{t-1}^{s} G_t {\rm d}r {\rm d}s &\lesssim t^{\alpha} \int_{t-1}^{t} \int_{t-1}^{s} (t-r)^{-\alpha} \|\Psi(r)\|_{L_{\omega}^{\rho}L_{x}^{q}}\\
&\lesssim \sup_{r \in [t-1,t]} \Big( r^{\alpha} \|\Psi(r)\|_{L_{\omega}^{\rho} L_{x}^{q}} \Big) \int_{t-1}^{t} \int_{t-1}^{s} (t-r)^{-\alpha} {\rm d}r {\rm d}s\\
&\lesssim \sup_{r \in [t-1,t]} \Big( r^{\alpha} \|\Psi(r)\|_{L_{\omega}^{\rho} L_{x}^{q}} \Big). 
\end{split}
\end{equation}
Here, for the second line above, we used $r \in [t-1,t]$ and $t > 2$ so that we replaced $t^{\alpha}$ with $r^{\alpha}$ with a universal proportionality constant. Also, since $\alpha<2$, the integral on the second line above is finite if and only if $\alpha<2$, which is indeed the case. Hence one obtains the last bound. 

Combining \eqref{eq:list_1}, \eqref{eq:1_1}, \eqref{eq:1_2} and \eqref{eq:1_3}, we conclude that
\begin{equation*}
t^{\alpha} \|\text{(I)}\|_{L_{\omega}^{\rho}L_{x}^{q}} \lesssim \delta^{4} \|f\|_{L_x^p} + \delta^4 \sup_{r \in [0,t]} \Big( r^{\alpha} \|\Psi(r)\|_{L_{\omega}^{\rho}L_{x}^{q}} \Big), 
\end{equation*}
which is of the form \eqref{eq:bootstrap}. This completes Term (I). 

\subsection{Term (II)}

We now turn to the second term in \eqref{eq:duhamel_list}. By Burkholder and triangle inequalities, we have the bound
\begin{equation*}
\|\text{(II)}\|_{L_{\omega}^{\rho} L_{x}^{q}} \lesssim \delta^3 \int_{0}^{t} \bigg( \int_{0}^{s} |G_t(r,s)|^{2} {\rm d}r \bigg)^{\frac{1}{2}} {\rm d}s, 
\end{equation*}
where this time $G_t$ has the expression
\begin{equation*}
G_t(r,s) = \|e^{i(t-s)\Delta} V^2 e^{i(s-r)\Delta} V \Psi(r)\|_{L_{\omega}^{\rho} L_{x}^{q}}, 
\end{equation*}
but still satisfies exactly the same bound as in \eqref{eq:integrand_bound}. Similar as before but with an inequality, we split the integral by
\begin{equation*}
\begin{split}
\int_{0}^{t} \Big( \int_{0}^{s} G_t^2 {\rm d}r \Big)^{\frac{1}{2}} {\rm d}s \leq &\int_{0}^{t} \Big( \int_{0}^{s \wedge 1} G_t^2 {\rm d}r \Big)^{\frac{1}{2}} {\rm d}s + \int_{1}^{t} \Big( \int_{1}^{s \wedge (t-1)} G_t^2 {\rm d}r \Big)^{\frac{1}{2}} {\rm d}s\\
&+ \int_{t-1}^{t} \Big( \int_{t-1}^{t} G_t^2 {\rm d}r \Big)^{\frac{1}{2}} {\rm d}s
\end{split}
\end{equation*}
The first two terms above can be controlled in exactly the same way as in Term (I) since they contain no singularity. For the third one, also similar as before, we have
\begin{equation*}
t^{\alpha} \int_{t-1}^{t} \Big( \int_{t-1}^{s} G_t^2 {\rm d}r \Big)^{\frac{1}{2}} {\rm d}s \lesssim \sup_{r \in [t-1,t]} \Big( r^{\alpha} \|\Psi(r)\|_{L_{\omega}^{\rho}L_{x}^{q}} \Big) \int_{t-1}^{t} \Big(\int_{t-1}^{s} (t-r)^{-2\alpha} {\rm d}r \Big)^{\frac{1}{2}} {\rm d}s. 
\end{equation*}
This time, the integral on the right hand side above has a worse singularity, but it is still integrable
\begin{equation*}
\frac{1}{2} (2\alpha-1) < 1 \leftrightarrow \alpha < \frac{3}{2}. 
\end{equation*}
Hence, $t^{\alpha} \|\text{(II)}\|_{L_{\omega}^{\rho}L_{x}^{q}}$ is also bounded by the right hand side of \eqref{eq:bootstrap} with $C_1$ and $C_2$ proportional to $\delta^3$. This completes the bound for Term (II). 

\begin{rmk} \label{rmk:singularity}
	As we can see, if there were no square root for the inner-integral, then bound for $\int_{t-1}^{t} \int_{t-1}^{s} G_t^2 {\rm d}r {\rm d}s$ would have a non-integrable singularity. This is precisely the case for Term (IV), for which we need to expand one more time to reduce this singularity to make it integrable. 
\end{rmk}

\subsection{Term (III)}

For Term (III), we have
\begin{equation*}
\|\text{(III)}\|_{L_{\omega}^{\rho}L_{x}^{q}} \lesssim \delta^3 \bigg[ \int_{0}^{t} \Big( \int_{0}^{s} G_t(r,s) {\rm d}r \Big)^{2} {\rm d}s \bigg]^{\frac{1}{2}}, 
\end{equation*}
where
\begin{equation*}
G_t(r,s) = \| e^{i(t-s)\Delta} V e^{i(s-r)\Delta} V^2 \Psi(r) \|_{L_{\omega}^{\rho}L_{x}^{q}}
\end{equation*}
satisfies the same bound as in \eqref{eq:integrand_bound}. Again, we split (the square of) the integral above as
\begin{equation*}
\begin{split}
\int_{0}^{t} \Big( \int_{0}^{s} G_{t} {\rm d}r \Big)^{2} {\rm d}s \lesssim \int_{0}^{t} &\Big( \int_{0}^{s \wedge 1} G_t {\rm d}r \Big)^{2} {\rm d}s + \int_{1}^{t} \Big( \int_{1}^{s \wedge (t-1)} G_t {\rm d}r \Big)^{2} {\rm d}s\\
&+ \int_{t-1}^{t} \Big( \int_{t-1}^{s} G_t {\rm d}r \Big)^{2} {\rm d}s. 
\end{split}
\end{equation*}
The first two terms on the right hand side can be controlled in the same way as in (I) and (II). The third one is also essentially the same except a slightly different but still integrable singularity. It can be controlled by
\begin{equation*}
t^{2\alpha} \int_{t-1}^{t} \Big( \int_{t-1}^{s} G_t {\rm d}r \Big)^{2} {\rm d}s \lesssim \sup_{r \in [t-1,t]} \Big( r^{2\alpha} \|\Psi(r)\|_{L_{\omega}^{\rho}L_{x}^{q}}^{2} \Big) \int_{t-1}^{t} \Big( \int_{t-1}^{s} (t-r)^{-\alpha} {\rm d}r \Big)^{2} {\rm d}s. 
\end{equation*}
The integral on the right hand side above is finite since $2(\alpha-1)<1$. Term (III) is then complete by taking square root of the above bounds. 

\subsection{Term (IV)}

As explained in Remark~\ref{rmk:singularity}, if we control Term (IV) in the same way as before, then we will end up a non-integrable singularity at $s \approx t$, so we need to expand the term one more time to make it integrable. 

Expanding $\Psi(r)$ as
\begin{equation*}
\Psi(r) = e^{ir\Delta} f - i \delta \int_{0}^{r} e^{i(r-u)\Delta} V \Psi(u) {\rm d}B_u - \frac{\delta^2}{2} \int_{0}^{r} e^{i(r-u)\Delta} V^2 \Psi(u) {\rm d}u, 
\end{equation*}
and substituting it into (IV) in \eqref{eq:duhamel_list}, we get
\begin{equation*}
\text{(IV)} = \text{(IV-i)} + \text{(IV-ii)} + \text{(IV-iii)}, 
\end{equation*}
where
\begin{equation} \label{eq:duhamel_list_2}
\begin{split}
\text{(IV-i)} &= - \delta^2 \int_{0}^{t} \int_{0}^{s} e^{i(t-s)\Delta} V e^{i(s-r)\Delta} V e^{ir\Delta} f {\rm d}B_r {\rm d}B_s,\\
\text{(IV-ii)} &= i \delta^3 \int_{0}^{t} \int_{0}^{s} \int_{0}^{r} e^{i(t-s)\Delta} V e^{i(s-r)\Delta} V e^{i(r-u)\Delta} V \Psi(u) {\rm d}B_u {\rm d}B_r {\rm d}B_s,\\
\text{(IV-iii)} &= \frac{\delta^4}{2} \int_{0}^{t} \int_{0}^{s} \int_{0}^{r} e^{i(t-s)\Delta} V e^{i(s-r)\Delta} V e^{i(r-u)\Delta} V^2 \Psi(u) {\rm d}u {\rm d}B_r {\rm d}B_s. 
\end{split}
\end{equation}
Term (IV-i) is straightforward. Indeed, by Burkholder and triangle inequalities, we have
\begin{equation*}
\|\text{(IV-i)}\|_{L_{\omega}^{\rho}L_{x}^{q}} \lesssim \delta^2 \bigg( \int_{0}^{t} \int_{0}^{s} \|e^{i(t-s)\Delta} V e^{i(s-r)\Delta} V e^{ir\Delta} f\|_{L_{\omega}^{\rho} L_{x}^{q}}^{2} {\rm d}r {\rm d}s \bigg)^{\frac{1}{2}}. 
\end{equation*}
Since $t>2$, by Lemma~\ref{le:op_strong_decay}, the integrand satisfies a pointwise bound
\begin{equation*}
\|e^{i(t-s)\Delta} V e^{i(s-r)\Delta} V e^{ir\Delta} f\|_{L_{\omega}^{\rho} L_{x}^{q}} \lesssim \scal{t-s}^{-\alpha} \scal{s-r}^{-\alpha} \scal{r}^{-\alpha} \|f\|_{L_x^p}, 
\end{equation*}
and hence the desired bound for (IV-i) follows immediately. As for (IV-ii), we have
\begin{equation*}
\|\text{(IV-ii)}\|_{L_{\omega}^{\rho}L_{x}^{q}} \lesssim \delta^3 \bigg( \int_{0}^{t} \int_{0}^{s} \int_{0}^{r} |G_t(u,r,s)|^{2} {\rm d}u {\rm d}r {\rm d}s  \bigg)^{\frac{1}{2}}, 
\end{equation*}
where again by Lemmas~\ref{le:variable_small}, ~\ref{le:op_strong_decay} and~\ref{le:exchange_trig}, the integrand
\begin{equation*}
G_t(u,r,s) = \|e^{i(t-s)\Delta} V e^{i(s-r)\Delta} V e^{i(r-u)\Delta} V \Psi(u)\|_{L_{\omega}^{\rho} L_{x}^{q}}
\end{equation*}
satisifes the pointwise bound
\begin{equation} \label{eq:integrand_bound_2}
G_t(u,r,s) \lesssim
\begin{cases}
\scal{t-s}^{-\alpha} \scal{s-r}^{-\alpha} \scal{r-u}^{-\alpha} \|f\|_{L_x^p}, &u\in[0,1]\\
\scal{t-s}^{-\alpha} \scal{s-r}^{-\alpha} \scal{r-u}^{-\alpha} \|\Psi(u)\|_{L_{\omega}^{\rho}L_{x}^{q}}, &u \in [1,t-1]\\
(t-u)^{-\alpha} \|\Psi(u)\|_{L_{\omega}^{\rho}L_{x}^{q}}\;, &u \in [t-1,t].
\end{cases}
\end{equation}
We now split the integration into three disjoint sub-domains: $\{u \leq 1\}$, $\{u \in [1,t-1]\}$ and $\{u \geq t-1\}$. The bound for the first two domains are similar as before. The only one containing a singularity is the third one, which we have the bound
\begin{equation*}
t^{2\alpha} \int_{t-1<u<r<s<t} G_t^2 {\rm d}u {\rm d}r {\rm d}s \lesssim \sup_{u \in [t-1,t]} \Big( u^{2\alpha} \|\Psi(u)\|_{L_{\omega}^{\rho}L_{x}^{q}}^{2} \Big) \int_{t-1}^{t} \int_{t-1}^{s} \int_{t-1}^{r} (t-u)^{-2\alpha} {\rm d}u {\rm d}r {\rm d}s. 
\end{equation*}
This time, the integral on the right hand side above is finite since the exponent of the singularity satisfies $2\alpha-2<1$. One can then conclude the desired bound for (IV-ii). 

Finally, the bound for Term (IV-iii) is essentially the same. This completes the case for (IV) as well as the proof of Theorem~\ref{th:main_decay} in $d=3$.

\section{Higher dimensions}
\label{sec:high_dim}

The argument for higher dimensions is essentially the same, except that when the inner-most integration variable is close to $t$, the singularity is worse since $\alpha$ becomes larger. Hence, we need to expand more times with the Duhamel formula in order to make the singularity integrable. 

In dimension $d$, we expand $\Psi(t)$ to the $d$-th order so that it can be expressed as a linear combination of terms of the form
\begin{equation} \label{eq:const_terms}
\bigg\{ \int_{0<s_1<\cdots<s_j<t} e^{i(t-s_j)\Delta} V_{j} \cdots V_2 e^{i(s_2-s_1)\Delta} V_1 e^{is_1 \Delta} f  {\rm d}X_{s_1} \cdots {\rm d}X_{s_j} \bigg\}_{j=1, \dots, d}
\end{equation}
and
\begin{equation} \label{eq:expansion}
\bigg\{\int_{0<s_1<\cdots<s_d<t} e^{i(t-s_d)\Delta} V_{d} \cdots V_{2} e^{i(s_2-s_1)\Delta} V_1 \Psi(s_1) {\rm d}X_{s_1} \cdots {\rm d}X_{s_d} \bigg\}, 
\end{equation}
where each $X_{s_j}$ is either $s_j$ or $B_{s_j}$, and each $V_j$ is either $\delta V$ or $\delta^2 V^2$. To control these terms, we first note that if $q$ is sufficiently close to $1$, then
\begin{equation*}
\alpha = d\Big( \frac{1}{2} - \frac{1}{q} \Big) = \frac{d}{2} - \kappa
\end{equation*}
for some sufficiently small $\kappa>0$. 

The bound for the terms in \eqref{eq:const_terms} are straightforward. As for \eqref{eq:expansion}, when $s_1<t-1$, the proof are exactly the same as before since the pointwise decay estimates are even better. Singularity occurs when $s_1 > t-1$, and the worst case is that all $X_{s_j}$ in \eqref{eq:expansion} are $B_{s_j}$, and we need to control
\begin{equation} \label{eq:high_dim_worst}
\delta^{d} t^{\alpha} \bigg( \int_{t-1<s_1<\cdots<s_d<t} \|e^{i(t-s_d)\Delta} V \cdots V e^{i(s_2-s_1)\Delta} V \Psi(s_1)\|_{L_{\omega}^{\rho}L_{x}^{q}}^{2} {\rm d}s_1 \cdots {\rm d}s_d \bigg)^{\frac{1}{2}}. 
\end{equation}
By Lemma~\ref{le:exchange_trig}, we have the pointwise bound
\begin{equation*}
\|e^{i(t-s_d)\Delta} V \cdots V e^{i(s_2-s_1)\Delta} V_1 \Psi(s_1)\|_{L_{\omega}^{\rho}L_{x}^{q}}^{2} \lesssim (t-s_1)^{-2\alpha} \|\Psi(s_1)\|_{L_{\omega}^{\rho}L_{x}^{q}}^{2}
\end{equation*}
for the integrand, so that \eqref{eq:high_dim_worst} is controlled by
\begin{equation*}
\delta^d \sup_{r \in [t-1,t]} \Big( r^{\alpha} \|\Psi(r)\|_{L_{\omega}^{\rho}L_{x}^{q}}\Big) \cdot \bigg( \int_{t-1<s_1<\cdots<s_d<t} (t-s_1)^{-2\alpha} {\rm d}s_1 \cdots {\rm d}s_k \bigg)^{\frac{1}{2}}. 
\end{equation*}
The integral on the right hand side above is finite since $-2\alpha+(d-1) > -1$. This bound is of the form \eqref{eq:bootstrap} with both $C_2$ proportional to $\delta^d$. The proof is then complete.

\bibliographystyle{Martin}
\bibliography{Refs}

\end{document}